\documentclass[a4paper,12pt]{amsart}

\usepackage{todonotes}
\usepackage{amssymb}
\usepackage{url}

\theoremstyle{thmstyletwo}%
%

%
\newtheorem{remark}{Remark}%
\newtheorem{question}{Question}

\newtheorem{defini}{Definition}
\newtheorem{thm}{Theorem}
\newtheorem{lemma}{Lemma} 
\newtheorem{corollary}{Corollary} 
\newtheorem{prop}{Proposition}

\numberwithin{equation}{section}

\title[The strange world of transfinite Melodies]{The strange world of transfinite Melodies -- Recognizability for weak and strong infinite time $\alpha$-register machines}

\author{Merlin Carl}

\begin{document}

\keywords{Infinite Time Register Machines; Recognizability; Constructibility}

\maketitle

\begin{abstract} 
For exponentially closed ordinals $\alpha$, we consider recognizability of constructible subsets of $\alpha$ for $\alpha$-(w)ITRMs and their distribution in the constructible hierarchy. In particular, for $\alpha$-ITRMs, we show that, there are lost melodies that are recognizable without parameters for all $\alpha$, that the iterated recognizability is absolute between $L$ and $V$ for most values of $\alpha$ and generalize ``all or nothing''-phenomenon known from ITRMs occurs for a proper class of $\alpha$. For $\alpha$-wITRMs, we offer a complete characterization of those $\alpha$ for which lost melodies exist and that the relation between the sets of computable and recognizable subsets of $\alpha$ varies wildly, depending on $\alpha$: The computable sets may be included among the recognizable sets (which is usually the case in ordinal computability), but there are also class many values of $\alpha$ for which the set of recognizable sets is empty and such for which the set of recognizable sets is non-empty, but disjoint from the set of computable sets. 


This paper is an extension of our paper in the CiE 2023 proceedings \cite{cie2023}.
\end{abstract}






\section{Introduction}



In \cite{HL}, Hamkins and Lewis introduced infinite time Turing machines (ITTMs), which are Turing machines that compute with transfinite time, but still on a ``standard'' tape of length $\omega$. Koepke then introduced a number of models of computation in which also memory is extended to transfinite ordinals; these include $\alpha$-ITTMs \cite{K1}, which are ITTMs with a tape of length $\alpha$, and also transfinite generalizations of register machines which can store a single ordinal less than a given ordinal $\alpha$ in each of their registers. Depending on their behaviour at limit steps, these are known as (weak) infinite time $\alpha$-register machines, or $\alpha$-(w)ITRMs for short (a brief explanation of these will be given below).

Associated with each model of computation are a concept of explicit definability -- called computability -- which concerns the ability of the machine to produce a certain object ``from scratch'', and another one of implicit definability, which concerns the ability of the machine to decide whether or not an object given in the oracle is equal to a certain $x$; in ordinal computability, the latter is known as ``recognizability''. For many models of ordinal computability, the ``lost melody phenomenon'' occurs, which means that there are objects which are recognizable, but not computable; this phenomenon was first discovered (and named) for ITTMs in \cite{HL}. Recognizability has been studied in detail for ITTMs and ITRMs in \cite{HL}, \cite{LoMe}, \cite{ITRM Recog 1}, \cite{ITRM Recog 2} and for Ordinal Turing Machines (OTMs) in \cite{CSW}. In \cite{LoMe}, we considered recognizability by $\alpha$-ITRMs. All of these works concerned real numbers, i.e., subsets of $\omega$. This has the advantage that the recognizability strength of different models becomes comparable. 

However, the natural domain of computation for $\alpha$-(w)ITRMs are clearly subsets of $\alpha$, not just subsets of $\omega$. (By analogy, the computability strength of these models (and also of $\alpha$-ITTMs) is studied in terms of subsets of $\alpha$, not of $\omega$.) The recognizability of subsets of arbitrary ordinals by OTMs with ordinal parameters is currently studied in joint work with Philipp Schlicht and Philip Welch; since already the recognizable subsets of $\omega$ can, depending on the set-theoretical background, go far beyond $L$ in this case by \cite{CSW}, it is hardly surprising that the same happens in the more general case. However, interesting phenomena also arise under the assumption $V=L$. This paper studies the recognizability strength of weak and strong $\alpha$-register machines with respect to constructible subsets of $\alpha$. While $\alpha$-ITRMs behave rather similarly to ITRMs, $\alpha$-wITRMs show a rather interesting behaviour. In particular, while the computable sets are included in the recognizable sets for all models studied so far, we will below show that, for class many values of $\alpha$, the sets of $\alpha$-wITRM-computable and $\alpha$-wITRM-recognizable subsets of $\alpha$ are both non-empty and disjoint, while for other values of $\alpha$, the set of $\alpha$-wITRM-recognizable subsets of $\alpha$ is empty. 



This paper expands our contribution to CiE 2023 \cite{cie2023} by several new main results (such as the existence of lost melodies for $\alpha$-ITRMs that are recognizable without parameters, Theorem \ref{recog closure}, Theorem \ref{distribution}(4-5), Theorem \ref{imp and recog}, Corollary \ref{comp and recog} and Corollary \ref{comp and recog without parameters}), a systematic reorganization of proofs by isolating reoccuring crucial steps as lemmata such as the existence of nice codes (Lemma \ref{if code then nice}) (which will hopefully increase the readability) and by more elaborate versions of some arguments for the sake of being self-contained.

\section{Basic definitions}

In the following, $\alpha$ will denote an exponentially closed ordinal, unless explicitly stated otherwise. We briefly describe $\alpha$-ITRMs and $\alpha$-wITRMs, which were originally introduced by Koepke in \cite{K1}; full definitions can also be found in \cite{CarlBook}. 

$\alpha$-register machines use finitely many registers, which can store a single ordinal strictly less than $\alpha$ each. Programs the $\alpha$-(w)ITRMs are finite sequences of program lines, each containing one of the basic commands to increment the content of a register by $1$, to copy the content of one register to another, or to jump to a certain program line when the contents of two registers agree and continue with the next one afterwards. For reasons of technical convenience, the last command was extended a bit in \cite{Koepkes Zoo II} to allow for an instantaneous comparison of two finite sequences of registers; this modification has no influence on the results in this paper. 
In addition, there is the oracle command: Oracles for $\alpha$-register machines are subsets $x$ of $\alpha$. Given a register index $j\in\omega$, the oracle command takes the content of the $j$-th register, say $\iota$, and then writes $1$ to the $j$th register when $\iota\in x$ and otherwise $0$. 

Infinitary register computations are now defined by recursion along the ordinals. At successor ordinals, the command in the active program line is simply carried out. (We shall assume that $\alpha$ is a limit ordinal, so that there is no question what to do when the incrementation operator increases a register content above $\alpha$.) At limit stages, the register contents and the active program line are obtained as the inferior limits of the sequences of the register contents and active program lines so far. A difficulty arises when this limit is equal to $\alpha$, and this can be solved in two ways: Either one regards the computation as undefined in such a way, thus obtaining ``weak'' or ``unresetting'' $\alpha$-register machines, called $\alpha$-wITRMs, or one resets the contents of the overflowing registers to $0$, which yields ``resetting'' or ``strong'' $\alpha$-register machines, called $\alpha$-ITRMs. When one lets $\alpha=\text{On}$, thus allowing arbitrary ordinals as register contents, one obtains Koepke's Ordinal Register Machines (ORMs), which are equal in computational power to Ordinal Turing Machines (OTMs, \cite{OTM}) and can compute exactly the constructible sets of ordinals \cite{ORM}. Thus, $\alpha$-wITRMs can be regarded as space-bounded versions of ORMs (with a constant space bound).

Thus, $\alpha$-wITRM-programs and $\alpha$-ITRM-programs are the same and both are just classical register machine programs as, e.g., introduced in \cite{Cutland}. When we write something like ``$\alpha$-wITRM-program'', we really mean that the program is intended to be run on an $\alpha$-wITRM. 

An important observation about $\alpha$-(w)ITRM-computations is the following:

\begin{defini}
In an $\alpha$-(w)ITRM-computation, a ``strong loop'' is a pair $(\iota,\xi)$ of ordinals such that the computation states -- i.e., the active program line and the register contents -- at times $\iota$ and $\xi$ are identical and such that, for every time in between, the computation states were in each component greater than or equal to these states. 
\end{defini}

It is easy to see from the liminf-rule that a strong loop will be repeated forever.

\begin{thm} [Cf. \cite{CarlBook}, generalizing \cite{KM}, Lemma $3$]
An $\alpha$-ITRM-program either halts or runs into a strong loop. 
An $\alpha$-wITRM-program either halts, runs into a strong loop or is undefined due to a register overflow. 
\end{thm}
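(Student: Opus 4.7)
The plan is to derive both parts of the theorem from Fodor's pressing-down lemma, applied to a regressive function extracted from the liminf rule. The wITRM statement reduces to the ITRM statement: an $\alpha$-wITRM and the corresponding $\alpha$-ITRM produce identical state sequences as long as no overflow occurs, so a wITRM computation that neither halts nor overflows generates the same state sequence as the parallel ITRM computation, and any strong loop for the latter transfers. I therefore focus on the ITRM version.

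Fix an $\alpha$-ITRM program and oracle producing a non-halting computation, and let $\theta$ be a regular uncountable cardinal (for instance $|\alpha|^{+}$, or $\omega_{1}$ when $\alpha$ is countable). Write $s_\eta = (p_\eta, r_1^\eta, \ldots, r_n^\eta)$ for the computation state at time $\eta < \theta$, where $p_\eta$ is the active program line and $r_i^\eta < \alpha$ are the register contents. For every limit ordinal $\lambda < \theta$, each component of $s_\lambda$ is the componentwise liminf of the corresponding component of $(s_\eta)_{\eta<\lambda}$ (with overflowing registers reset to $0$, by the ITRM convention). From this I extract the least $\tau_\lambda < \lambda$ such that $s_\eta \geq s_\lambda$ componentwise for every $\eta \in [\tau_\lambda, \lambda)$; this $\tau_\lambda$ exists because the componentwise infimum is eventually attained from below.

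The assignment $\lambda \mapsto \tau_\lambda$ is regressive on the club $\mathrm{Lim}(\theta)$, so Fodor's lemma furnishes a single $\tau^* < \theta$ and a stationary set $S \subseteq \mathrm{Lim}(\theta) \cap (\tau^*, \theta)$ on which $\tau_\lambda = \tau^*$ identically. For any $\lambda_1 < \lambda_2$ in $S$, the defining property of $\tau_{\lambda_2}$ applied at $\eta = \lambda_1$ yields $s_{\lambda_1} \geq s_{\lambda_2}$ componentwise; hence $(s_\lambda)_{\lambda \in S}$ is weakly decreasing in each of its $n+1$ ordinal-valued components, and each such sequence must eventually stabilize. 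Taking the maximum of the finitely many stabilization times gives some $\lambda_0 \in S$ with $s_\lambda = s_{\lambda_0}$ for all $\lambda \in S$ with $\lambda \geq \lambda_0$.

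Choosing any $\lambda \in S$ with $\lambda > \lambda_0$ now exhibits a strong loop $(\lambda_0, \lambda)$: the states at times $\lambda_0$ and $\lambda$ coincide, and since $\tau_\lambda = \tau^* \leq \lambda_0$, the defining property of $\tau_\lambda$ gives $s_\eta \geq s_\lambda = s_{\lambda_0}$ for every $\eta \in [\lambda_0, \lambda]$. The main technical obstacle I anticipate is the temptation to search for a strong loop with $\xi = \lambda$ and $\iota < \lambda$ directly from the liminf rule at a single limit stage: this would require \emph{simultaneous} attainment of $s_\lambda$ in all $n+1$ components below $\lambda$, which the liminf rule does not guarantee (each component is realized cofinally in $\lambda$, but possibly never all at once). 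The Fodor argument circumvents this precisely by matching two distinct limit stages whose liminf states happen to coincide.
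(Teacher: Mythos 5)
The paper itself does not prove this theorem; it imports it from \cite{CarlBook} and \cite{KM}, so there is no in-text argument to compare against. Your Fodor-based strategy is in the spirit of the standard proofs, and the reduction of the wITRM case to the ITRM case is fine. However, there is a genuine gap at the first step of the main argument: the ordinal $\tau_\lambda$ you extract need not exist. You justify its existence by asserting that ``the componentwise infimum is eventually attained from below'', but this fails at limit ordinals of small cofinality. Consider a program that simply increments register $R_1$: at time $\lambda=\omega$ the liminf rule gives $r_1^{\omega}=\omega$, while $r_1^{\eta}=\eta<\omega$ for every $\eta<\omega$, so there is no $\tau<\omega$ with $s_\eta\geq s_\omega$ on $[\tau,\omega)$. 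In general $\liminf_{\eta\to\lambda}r_i^{\eta}=\sup_{\tau<\lambda}\inf_{\eta\in[\tau,\lambda)}r_i^{\eta}$ is a supremum that may fail to be attained, and this can happen at stationarily many limits below your chosen $\theta$. In particular, for countable $\alpha$ your suggested $\theta=\omega_1$ cannot work at all, since every limit below $\omega_1$ has countable cofinality.

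The repair is standard and leaves the rest of your argument intact. The state space has cardinality $|\alpha|$ (finitely many program lines and registers, contents below $\alpha$). If $\operatorname{cf}(\lambda)>|\alpha|$, then each register realizes fewer than $\operatorname{cf}(\lambda)$ distinct values below $\lambda$; the values that do not occur cofinally in $\lambda$ are realized only on a bounded initial segment, so beyond some $\tau<\lambda$ every occurring value occurs cofinally, and the infimum over $[\tau,\lambda)$ equals the liminf (registers reset to $0$ at $\lambda$ cause no trouble, since $s_\eta\geq 0$ trivially in that component). Hence $\tau_\lambda<\lambda$ exists whenever $\operatorname{cf}(\lambda)>|\alpha|$. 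Taking $\theta=|\alpha|^{++}$ and applying Fodor to the stationary set $\{\lambda<\theta:\operatorname{cf}(\lambda)=|\alpha|^{+}\}$ rather than to the club of all limits, the remainder of your proof --- the weak componentwise decrease along the homogeneous stationary set, the stabilization of the finitely many ordinal-valued components, and the identification of the resulting strong loop --- goes through as written.
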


We denote by $\chi_{x}$ the characteristic function of a set $x\subseteq\alpha$. 

\begin{defini}
A set $x\subseteq\alpha$ is $\alpha$-(w)ITRM-computable if and only if there is an $\alpha$-(w)ITRM-program $P$ and some parameter $\rho<\alpha$ such that, for each $\iota<\alpha$, $P(\iota,\rho)\downarrow=\chi_{x}(\iota)$. If $\rho=0$, we say that $x$ is $\alpha$-ITRM-computable without parameters.
\end{defini}

We will now define the concept of decidability of a set of subsets of $\alpha$. For the tape models of transfinite computations, such as ITTMs, one needs to distinguish between recognizability, semirecognizability and co-recognizability, depending on whether $\{x\}$ is decidable, semi-decidable or has a semi-decidable complement. Indeed for ITTMs, these concepts have different extensions, see \cite{CSW}. 

For ITRMs, no such distinction had to be introduced, as they exhibit a rather surprising feature: The halting problem for ITRMs with a fixed number of registers is solvable by an ITRM-program with a larger number or registers, uniformly in the oracle (see Koepke and Miller, \cite{KM}, Theorem $4$). Thus, semi-, co- and plain decidability all coincide. For wITRMs, there is even less reason for conceptual differentiation, since for these, recognizability coincides with computability, see \cite{LoMe}, \cite{CarlBook}. 

For general $\alpha$-(w)ITRMs, however, the situation is different. It is currently not known whether the bounded halting problem is solvable for $\alpha$-ITRMs unless $\alpha=\omega$ or $L_{\alpha}\models$ZF$^{-}$. Moreover, for $\alpha$-wITRMs, one needs to decide whether, in the definition of the semi-decidability of a set $x\subseteq\mathfrak{P}(\alpha)$, one allows undefined computations (i.e., computations in which an oracle overflows) or not. 


\begin{defini}
For $X\subseteq\mathfrak{P}(\alpha)$, let us denote by $\chi_{X}$ the characteristic function of $X$ in $\mathfrak{P}(\alpha)$. 

A set $X\subseteq\mathfrak{P}(\alpha)$ is $\alpha$-(w)ITRM-semi-decidable if and only if there are an $\alpha$-(w)ITRM-program $P$ and some $\xi<\alpha$ such that, for all $y\subseteq\alpha$, $P^{y}(\xi)\downarrow$ if and only if $y\in X$. In the case of $\alpha$-wITRMs, we demand that the computations $P^{y}(\xi)$ for $y\notin X$ do not halt, but are still defined. 

$X$ is called $\alpha$-(w)ITRM-co-semi-decidable if and only if $\mathfrak{P}(\alpha)\setminus X$ is $\alpha$-(w)ITRM-semi-decidable. 

If $X$ is both $\alpha$-(w)ITRM-semi-decidable and $\alpha$-(w)ITRM-co-semi-decidable, i.e., if there is an $\alpha$-(w)ITRM-program $P$ and some $\xi<\alpha$ such that $P^{y}(\xi)\downarrow=\chi_{X}(y)$ for all $y\subseteq\alpha$, we call $X$ $\alpha$-(w)ITRM-decidable. 

If there are an $\alpha$-wITRM-program $P$ and some $\xi<\alpha$ such that $P^{y}(\xi)\downarrow$ for all $y\in X$ and, for all $y\notin X$, $P^{y}(\xi)$ is either undefined or diverges, we call $X$ ``weakly $\alpha$-wITRM-semi-decidable''. The concept of weak $\alpha$-wITRM-co-semidecidability and of $\alpha$-wITRM-decidability are now defined in the obvious way.

If $\xi=0$, we say that $X$ is $\alpha$-(w)ITRM-(co-)(semi-)decidable etc. without parameters.
\end{defini}

\begin{defini}
Let $x\subseteq\alpha$. 

$x$ is called $\alpha$-(w)ITRM-recognizable if and only if $\{x\}$ is $\alpha$-(w)ITRM-decidable. 

$x$ is called $\alpha$-(w)ITRM-semirecognizable if and only if $\{x\}$ is $\alpha$-(w)ITRM-semidecidable. 

$x$ is called $\alpha$-(w)ITRM-cosemirecognizable if and only if $\{x\}$ is $\alpha$-(w)ITRM-co-semi-decidable. 

The weak versions of $\alpha$-wITRM-semirecognizability and $\alpha$-wITRM-co-semi-recognizability are defined in the obvious way. If the (co-)(semi-)decision algorithm uses only $0$ as a parameter, we say that $x$ is $\alpha$-(w)ITRM-(co)-(semi)-recognizable without parameters.
\end{defini}

We will use $p$ to denote Cantor's ordinal pairing function. Moreover, for a set $X$, an $\in$-formula $\phi$ and a finite tuple $\vec{p}$, we denote by $\text{Def}(X,\phi,\vec{p})$ the set $\{x\in X:(X,\in)\models\phi(x,\vec{p})\}$.

When $X$ is a set and $E$ is a binary relation on $X$, then the structure $(X,E)$ can be encoded as a subset of an exponentially closed ordinal $\alpha$ (cf., e.g., \cite{CarlBook}, Def. 2.3.18) by fixing a bijection $f:\alpha\rightarrow X$ and letting $c_{f}(X,E):=\{p(\iota,\xi):\iota,\xi<\alpha\wedge f(\iota)Ef(\xi)\}$.
 In general, when $\alpha\subseteq X$, it is computationally nontrivial to identify which $\iota<\alpha$ codes a certain ordinal $\xi<\alpha$. In order to circumvent this problem, we define a class of codes for which this is trivial, as $\iota<\alpha$ is encoded by the $\iota$-th limit ordinal.

\begin{defini}{\label{nice code}}
Let $\alpha$ be exponentially closed, $X$ be a transitive set with $\alpha\subseteq X$, and let $f:\alpha\rightarrow X$ be bijective. Then $c_{f}(X,\in)$ is called an $\alpha$-\textit{code} for $X$. If $f$ is additionally such that, for all $\iota<\alpha$, we have $f(\omega\iota)=\iota$, then $c_{f}(X,\in)$ is called a \textit{nice} $\alpha$-code for $X$. 
We say that $c_{f}(X,\in)$ is \textit{very nice} if additionally $f(1)=\alpha$.

For $\alpha,\gamma\in\text{On}$, we denote by $\text{nice}_{\alpha}(\gamma)$ the set of nice $\alpha$-codes for $L_{\gamma}$. If $\text{nice}_{\alpha}(\gamma)$ contains a constructible element, then we denote by $c_{\gamma}$ its $<_{L}$-minimal element. Moreover, we let $\text{nice}_{\alpha}:=\bigcup_{\gamma\in\text{On}}\text{nice}_{\alpha}(\gamma)$. 
\end{defini}

When proving that certain subsets of $\alpha$ are not computable, it is often convenient to recall from the folklore that no 
$L$-level can contain a nice code for itself: 

\begin{lemma}{\label{no code for itself}}
Let $\beta<\alpha$. 
Then $L_{\alpha}$ does not contain a nice $\beta$-code for $L_{\alpha}$. 
\end{lemma}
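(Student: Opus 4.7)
The plan is to argue by a Cantor-style diagonalization. Suppose for contradiction that $c \in L_\alpha$ is a nice $\beta$-code for $L_\alpha$, witnessed by a bijection $f : \beta \to L_\alpha$ with $f(\omega\iota) = \iota$ for all $\iota < \beta$ and $c = c_f(L_\alpha, \in)$. The candidate diagonal object is
\[
D := \{\iota < \beta : f(\iota) \subseteq \beta \text{ and } \iota \notin f(\iota)\},
\]
which is a subset of $\beta$ and hence potentially an element of $L_\alpha$.

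The key step is to rewrite the defining condition of $D$ so that it uses only $c$ and $\beta$, with no direct reference to $f$. Here the niceness property is essential: it says that the ordinals $< \beta$ sit inside $L_\alpha$ precisely as the $f$-images of the points $\omega\eta$ for $\eta < \beta$. Consequently, ``$f(\iota) \subseteq \beta$'' amounts to ``every $\xi < \beta$ with $p(\xi,\iota) \in c$ is of the form $\omega\eta$ for some $\eta < \beta$'', and ``$\iota \in f(\iota)$'' amounts to ``$p(\omega\iota,\iota) \in c$'' (since the unique code of $\iota$ is $\omega\iota$). Both translations yield $\Delta_0$ conditions on $c$ with quantifiers bounded by $\beta$, so $D$ is $\Delta_0$-definable from $c$ and $\beta$.

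Since $c \in L_\alpha$ and $\beta < \alpha$, both parameters lie in some $L_\gamma$ with $\gamma < \alpha$, so that $D \in L_{\gamma+1} \subseteq L_\alpha$. Because $f$ is a surjection onto $L_\alpha$, there exists $\iota_D < \beta$ with $f(\iota_D) = D$. Since $D \subseteq \beta$ we have $f(\iota_D) \subseteq \beta$ automatically, so the definition of $D$ collapses to $\iota_D \in D \Longleftrightarrow \iota_D \notin f(\iota_D) = D$, the desired contradiction.

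The main hurdle is the translation in the second paragraph: finding a first-order rewriting of $D$ that uses only $c$. Once this is achieved, the standard Cantor argument closes the proof, and crucially we do \emph{not} need $L_\alpha$ to perform anything as sophisticated as the Mostowski collapse of the well-founded extensional relation coded by $c$ --- such closure would typically require admissibility rather than mere exponential closure of $\alpha$.
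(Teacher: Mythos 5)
Your proof is correct and follows essentially the same route as the paper: a Russell-style diagonal set defined from $c$ via the niceness condition $f(\omega\iota)=\iota$, placed in $L_\alpha$ by definability over a level $L_\gamma$ with $\gamma<\alpha$, and then collapsed to the contradiction $\iota_D\in D\leftrightarrow\iota_D\notin D$. The paper simply uses the leaner diagonal set $D=\{\iota<\beta: p(\omega\iota,\iota)\notin c\}$, so your extra clause ``$f(\iota)\subseteq\beta$'' (and its translation through $c$) is harmless but unnecessary.
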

\begin{proof}
Suppose for a contradiction that $c\in\mathfrak{P}(\beta)\cap L_{\alpha}$ codes $L_{\alpha}$ via $f:\beta\rightarrow L_{\alpha}$. 
There is some $\delta<\alpha$ such that $c$ is defined over $L_{\delta}$ by some $\in$-formula (possibly with parameters in $L_{\delta}$). 

Consider the set $D:=\{\iota<\beta:p(\omega\iota,\iota)\notin c\}$. Since $c$ is definable over $L_{\delta}$, so is $D$, so that $D\in L_{\alpha}$. Pick $\xi$ such that $f(\xi)=D$. Now $\xi\in D\leftrightarrow p(\omega\xi,\xi)\notin c\leftrightarrow f(\omega\xi)\notin f(\xi)\leftrightarrow \xi\notin D$, a contradiction. 
\end{proof}

\begin{remark} 

In ordinal computability and in definability considerations, one can often switch back and forth between ordinals and the corresponding $L$-levels, so that their difference seems negligible. This is different with nice codes, 
since already $L_{\omega+1}$ contains codes for all ordinals below $\omega_{1}^{\text{CK}}$, but not even a code for $L_{\omega+1}$.
\end{remark}

\begin{question}
Is it possible for an $L$-level to contain a code for itself?
\end{question}

As the following shows, there is no trifling difference between codes and nice codes concerning the question where such codes appear in the constructible hierarchy:

\begin{lemma}{\label{if code then nice}}
   Let $\alpha<\beta<\gamma$ be ordinals, where $\alpha$ is exponentially closed. Then the following are equivalent:
\begin{enumerate} 
\item $L_{\gamma+1}\setminus L_{\gamma}$ contains a bijection between $\alpha$ and $L_{\beta}$.
\item $L_{\gamma+1}\setminus L_{\gamma}$ contains an $\alpha$-code for $L_{\beta}$ .
\item $L_{\gamma+1}\setminus L_{\gamma}$ contains a nice $\alpha$-code for $L_{\beta}$.
\item $L_{\gamma+1}\setminus L_{\gamma}$ contains a very nice $\alpha$-code for $L_{\beta}$.
\end{enumerate}
\end{lemma}
\begin{proof}
   If there is a bijection $f:\alpha\rightarrow L_{\gamma}$ in $L_{\gamma+1}$, then there is some formula $\phi$ such that $f(x)=y$ if and only if $L_{\gamma}\models\phi(x,y)$ (we suppress parameters for the sake of simplicity). Then $c_{f}=\{(a,b):\exists{x,y}(\phi(a,x)\wedge\phi(b,y)\wedge x\in y)\in L_{\gamma+1}$. So (1)$\Rightarrow$(2).

   Now suppose that $c\in L_{\gamma+1}\setminus L_{\gamma}$ is an $\alpha$-code for $L_{\beta}$. In particular, this means that a new subset of $\alpha$ is generated at the $L$-level $\gamma$, which, by fine-structure, implies that a bijection between $\alpha$ and $L_{\gamma}$, and hence a bijection $f:\alpha\rightarrow L_{\beta}$, is definable over $L_{\gamma}$, and hence an element of $L_{\gamma+1}$. 

    (4)$\Rightarrow$(3) and (3)$\Rightarrow$(2) are trivial. We show that (2) implies (3).
    
    Suppose that (2) holds, and let $c\in L_{\gamma+1}\setminus L_{\gamma}$ be an $\alpha$-code for $L_{\beta}$. By what we just showed, we have (1), so there is a bijection $f:\alpha\rightarrow L_{\beta}$ in $L_{\gamma+1}\setminus L_{\gamma}$. 
    We want to define a new bijection $\hat{f}:\alpha\rightarrow L_{\beta}$ over $L_{\gamma}$ with the property that $\hat{f}(\omega\iota)=\iota$ for all $\iota<\alpha$. Then $c_{\hat{f}}(L_{\beta})$ will be as desired: It is definable over $L_{\gamma}$ (and hence contained in $L_{\gamma+1}$) because $\hat{f}$ is, and it is not an element of $L_{\gamma}$ by Lemma \ref{no code for itself}. Let $f=\{(x,y)\in L_{\gamma}:L_{\gamma}\models\phi_{f}(x,y)\}$ (as above, parameters are suppressed). 
    
    We start by define a surjection $f^{\prime}:\alpha\rightarrow L_{\beta}$ by letting $f^{\prime}(\omega\xi)=\xi$ and $f^{\prime}(\xi+1)=f(\xi)$ for all $\xi<\alpha$. Note that $f^{\prime}$ is still definable over $L_{\gamma}$ as 
    $$\phi_{f^{\prime}}(x,y):\Leftrightarrow \exists{\xi<\alpha}(x=\omega\xi\wedge y=\xi))\vee\exists{\xi<\alpha}(x=\xi+1\wedge\phi_{f}(\xi,y)).$$

    Although $f^{\prime}$ is clearly surjective (as $L_{\gamma}=f[\alpha]=f^{\prime}[\{\xi+1:\xi\in\alpha\}]$), it will not be a bijection: Every $\xi<\alpha$ has exactly two different pre-images $\omega\xi$ and $f^{-1}(\xi)+1$ under $f^{\prime}$, while every $x\in L_{\gamma}\setminus\alpha$ has precisely one preimage under $f^{\prime}$. 
    In order to obtain a bijection, we make a further modification to $f^{\prime}$. To this end, define, for a given set $x$, $\eta^{0}(x):=x$ and $\eta^{k+1}(x)=\{\eta^{k}(x)\}$ for $k\in\omega$. Note that, since $\alpha$ is a limit ordinal, we have $\eta^{k}(\iota)\in L_{\iota+k+1}\subseteq L_{\alpha}$ for every $\iota<\alpha$ and every $k\in\omega$. Moreover, the map $\eta:\omega\times\alpha\rightarrow\alpha$ which sends $(k,\iota)\in\omega\times\alpha$ to $\eta^{k}(\iota)$ is definable over $L_{\alpha}$ as $$\phi_{\eta}(k,\iota,x):\Leftrightarrow\\\exists{f}(f:k\rightarrow L_{\alpha}\wedge f(0)=\{\iota\}\wedge f(k-1)=x\wedge \forall{i<(k-1)}f(i+1)=\{f(i)\}).$$ 
    
    For every $\xi<\alpha$ such that $f^{\prime}(\xi+1)=\iota\in\alpha$, we let $\hat{f}(\xi+1):=\eta^{2}(\iota)$,\footnote{That we take $\eta^{2}(\iota)$ (i.e., $\{\{\iota\}\}$) rather than $\eta^{1}(\iota)$ (i.e., $\{\iota\}$) has the sole reason that this does not require a special treatment of $0$, as $\{\{0\}\}$ is not an ordinal, while $\{0\}=1$.} and in general $\hat{f}(f^{-1}(\eta^{k}(\iota)))=\eta^{k+1}(\iota)$ for $k\in\omega$, while for all $\xi<\alpha$ not covered by this modification, we let $\hat{f}(\xi)=f^{\prime}(\xi)$. 

    We show that $\hat{f}:\alpha\rightarrow L_{\beta}$ is indeed bijective and that it is definable over $L_{\gamma}$.

    To see that $\hat{f}$ is bijective, let $x\in L_{\gamma}$. If $x\notin \alpha$ and $x$ is not of the form $\eta^{k}(\iota)$ for some $k\in\omega\setminus\{1\}$, $\iota<\alpha$, then $|\hat{f}^{-1}(x)|=|(f^{\prime})^{-1}(x)|=|f^{-1}(x)|=1$. If $x\in\alpha$, then $|\hat{f}^{-1}(x)|=|\{\omega x\}|=1$. If $x=\eta^{k}(\iota)$ for some $1<k\in\omega$, $\iota<\alpha$, then $|\hat{f}^{-1}(x)|=|\{(f^{\prime})^{-1}(\eta^{k-1}(x)\}|=1$. 
    So every $x\in L_{\gamma}$ has exactly one pre-image under $\hat{f}$, as desired.

    We now show that $\hat{f}$ is definable over $L_{\gamma}$. Indeed, we have $\hat{f}(x)=y$ if and only if 

    \begin{enumerate}
        \item $x$ is a limit ordinal and $y=f^{\prime}(x)$ or
        \item $\exists{\xi<\alpha}(x=\xi+1\wedge f^{\prime}(\xi+1)\in\alpha\wedge y=\eta^{2}(f^{\prime}(\xi+1)))$ or
        \item $\exists{k\in\omega}\exists{\iota<\alpha}(k\geq 2\wedge x=(f^{\prime})^{-1}(\eta^{k}(\iota))\wedge y=\eta^{k+1}(\iota)$.
    \end{enumerate}

    Since the uses of $\eta$ and $f^{\prime}$ can be eliminated using $\phi_{\eta}$ and $\phi_{f^{\prime}}$, this is expressible as an $\in$-formula over $L_{\gamma}$.

    By an obvious analogous argument (additionally letting $f^{\prime}(1)=\alpha$ and then adapting $\hat{f}$ accordingly), we obtain (2)$\Rightarrow$(4).

\end{proof}

We recall a standard definition and two observations.

\begin{defini}
For $\alpha\in\text{On}$, $\sigma_{\alpha}$ is the first stable ordinal above $\alpha$, that is, the smallest ordinal $\beta>\alpha$ such that $L_{\beta}\prec_{\Sigma_{1}}L$. 
\end{defini}

The next lemmas are part of the folklore.

\begin{lemma}{\label{sigma alpha}}
For an ordinal $\alpha$, $\sigma_{\alpha}$ is the supremum of all ordinals $\beta$ such that, for some $\in$-formula $\phi$ and some $\xi<\alpha$, $\beta$ is minimal with the property $L_{\beta}\models\phi(\xi)$. 
\end{lemma}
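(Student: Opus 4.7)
Denote the claimed supremum by $\tau_{\alpha}$ and by $D$ the set of ordinals over which it is taken, so $\tau_{\alpha}=\sup D$. The plan is to show both inequalities $\tau_{\alpha}\leq\sigma_{\alpha}$ and $\sigma_{\alpha}\leq\tau_{\alpha}$, by quite different means.

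For $\tau_{\alpha}\leq\sigma_{\alpha}$, I would argue by direct $\Sigma_{1}$-reflection. Given $\beta\in D$ arising from a formula $\phi$ and parameter $\xi<\alpha$, the statement $\exists\gamma\,(L_{\gamma}\models\phi(\xi))$ is $\Sigma_{1}$ in $\xi$, since satisfaction for a fixed formula is uniformly $\Delta_{1}$ and quantification over $L$-levels is $\Sigma_{1}$. It holds in $L$; since $\xi<\alpha<\sigma_{\alpha}$ and $L_{\sigma_{\alpha}}\prec_{\Sigma_{1}}L$, it reflects to $L_{\sigma_{\alpha}}$, yielding some $\gamma<\sigma_{\alpha}$ with $L_{\gamma}\models\phi(\xi)$. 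Minimality of $\beta$ then gives $\beta\leq\gamma<\sigma_{\alpha}$, and taking suprema we conclude.

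For the converse $\sigma_{\alpha}\leq\tau_{\alpha}$, I would proceed by contradiction, assuming $\tau_{\alpha}<\sigma_{\alpha}$. A short diagonal step first shows $\tau_{\alpha}\notin D$: were $\tau_{\alpha}$ the minimal $\gamma$ with $L_{\gamma}\models\phi_{0}(\xi_{0})$ for some $\phi_{0},\xi_{0}<\alpha$, then the minimal $L$-level witnessing the $\Sigma_{1}$-statement $\exists\gamma\,(L_{\gamma}\models\phi_{0}(\xi_{0}))$ would itself lie in $D$ yet strictly exceed $\tau_{\alpha}$ (it must contain the witness $\tau_{\alpha}$), contradicting $\sup D=\tau_{\alpha}$. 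As a consequence, every ordinal $\rho$ that is $\Sigma_{1}$-definable in $L$ from some $\xi<\alpha$ satisfies $\rho<\tau_{\alpha}$: writing $\rho$ as the unique solution to some $\Sigma_{1}$-formula $\psi(\,\cdot\,,\xi)$, the minimal $L$-level containing a solution is in $D$, hence below $\tau_{\alpha}$, and it contains $\rho$.

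Now, by minimality of $\sigma_{\alpha}$, $L_{\tau_{\alpha}}\not\prec_{\Sigma_{1}}L$, so I fix a $\Sigma_{1}$-formula $\phi(x)=\exists y\,\psi(y,x)$ and a parameter $a\in L_{\tau_{\alpha}}$ with $L\models\phi(a)$ but no witness in $L_{\tau_{\alpha}}$. Using cofinality of $D$ in $\tau_{\alpha}$, I pick $\gamma\in D$ with $a\in L_{\gamma}$, coming from a parameter $\xi<\alpha$. The witness function $a'\mapsto \mathrm{rk}_{L}(L\text{-least }y\text{ with }\psi(y,a'))$ is $\Sigma_{1}$-definable in $a'$ via Jensen's $\Sigma_{1}$-uniformization in $L$; by Replacement in $L$, its image on $\{a'\in L_{\gamma}:L\models\phi(a')\}$ is a set, whose supremum $\mu$ is then $\Sigma_{1}$-definable from $\gamma$ and, by composition, from $\xi$. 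The preliminary observation gives $\mu<\tau_{\alpha}$, so a witness to $\phi(a)$ lies in $L_{\mu}\subseteq L_{\tau_{\alpha}}$, contradicting the choice of $a$. I expect the main obstacle to be precisely this last step: combining Replacement in $L$ with the uniform $\Sigma_{1}$-definability of the witness function in its argument $a'$, together with the $\tau_{\alpha}\notin D$ diagonal — each ingredient is standard, but the synthesis requires care in bookkeeping parameters and complexities.
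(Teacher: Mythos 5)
Your reflection direction ($\tau_{\alpha}\leq\sigma_{\alpha}$), the diagonal step showing $\tau_{\alpha}\notin D$, and the observation that every ordinal $\Sigma_{1}$-definable in $L$ from a parameter below $\alpha$ lies below $\tau_{\alpha}$ are all correct (the paper offers no proof of this lemma, so your argument stands on its own). The gap is exactly where you suspected it: the claim that $\mu=\sup\{F(a'):a'\in L_{\gamma},\ L\models\phi(a')\}$ is $\Sigma_{1}$-definable from $\gamma$ is false. The domain of $F$ is only $\Sigma_{1}$, so ``$\mu$ bounds all witness ranks'' involves an unbounded universal quantifier and is $\Pi_{1}$, making ``least such bound'' $\Sigma_{2}$; and this cannot be repaired, because suprema of $\Sigma_{1}$ partial functions over a set are precisely the kind of ordinal the lemma is about. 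Concretely, take $\gamma=\omega$ and $\psi(y,n)$ saying ``$y$ is an $L$-level satisfying the $n$-th parameter-free sentence'': then $\mu$ is exactly $\tau_{\omega}$, which by your own preliminary observation is not $\Sigma_{1}$-definable from any parameter below $\omega$. So the argument is circular at its crux. The standard repair bounds only the witness for the given $a$, not all witnesses at once: since $\gamma$ is the \emph{minimal} level with $L_{\gamma}\models\phi_{\gamma}(\xi)$, a fine-structural hull argument (the same one the paper invokes in its proof of Lemma \ref{sigma sequence}) shows that $L_{\gamma}$ is the $\Sigma_{1}$- (or $\Sigma_{n}$-) hull of $\xi+1$ in itself, whence $a$ is $\Sigma_{1}$-definable in $L$ from $\xi$; the least level containing a witness for $\phi(a)$ is then the minimal level satisfying a formula in the parameter $\xi$, hence an element of $D$, hence below $\tau_{\alpha}$.

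A second, related gap: the inference ``by minimality of $\sigma_{\alpha}$, $L_{\tau_{\alpha}}\not\prec_{\Sigma_{1}}L$'' requires $\tau_{\alpha}>\alpha$, which you never establish, and which genuinely fails when $\alpha$ is itself $\Sigma_{1}$-stable. For $\alpha=\sigma_{\omega}$, every $\xi<\alpha$ is $\Sigma_{1}$-definable in $L$ without parameters, say by $\exists y\,\theta(y,x)$, so the minimal level satisfying $\phi(\xi)$ is bounded by the minimal level satisfying the parameter-free sentence $\exists x(\exists y\,\theta(y,x)\wedge\phi(x))$, which lies below $\alpha$ by reflection; hence $\tau_{\alpha}=\alpha<\sigma_{\alpha}$ and the lemma as literally stated fails for stable $\alpha$. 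Any correct proof must therefore use non-stability of $\alpha$ (or the statement must be adjusted), and your writeup should flag where that hypothesis enters.
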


\begin{lemma}{\label{sigma sequence}}
If $\alpha$ and $\beta$ are ordinals such that $\beta\in[\alpha,\sigma_{\alpha})$, then $\sigma_{\beta}=\sigma_{\alpha}$.
\end{lemma}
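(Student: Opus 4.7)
The statement unpacks as a double inequality $\sigma_\beta \leq \sigma_\alpha$ and $\sigma_\alpha \leq \sigma_\beta$, and each direction should follow directly from the minimality clause in the definition of $\sigma_\gamma$ as the least ordinal strictly above $\gamma$ whose corresponding $L$-level is $\Sigma_1$-elementary in $L$. So my plan is to argue both directions symmetrically from this minimality.

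For $\sigma_\beta \leq \sigma_\alpha$, the hypothesis $\beta < \sigma_\alpha$ together with $L_{\sigma_\alpha} \prec_{\Sigma_1} L$ means that $\sigma_\alpha$ is a candidate in the set of ordinals $\gamma > \beta$ satisfying $L_\gamma \prec_{\Sigma_1} L$. Since $\sigma_\beta$ is the least such ordinal, $\sigma_\beta \leq \sigma_\alpha$ follows. For the reverse inequality $\sigma_\alpha \leq \sigma_\beta$, I use that $\beta \geq \alpha$ gives $\sigma_\beta > \beta \geq \alpha$, so $\sigma_\beta$ is an ordinal strictly greater than $\alpha$ with $L_{\sigma_\beta} \prec_{\Sigma_1} L$; by minimality of $\sigma_\alpha$ this forces $\sigma_\alpha \leq \sigma_\beta$. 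Combining gives equality.

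There is essentially no obstacle here: the lemma is a bookkeeping observation about the minimality defining $\sigma_\gamma$, and no genuine use of the preceding Lemma \ref{sigma alpha} (the characterization of $\sigma_\alpha$ via minimal witnesses of formulas with ordinal parameters below $\alpha$) is needed. If anything, the only subtle point worth stating explicitly is the degenerate case $\beta = \alpha$, where the conclusion is trivial, and the observation that we do use $\beta \geq \alpha$ (not merely $\beta < \sigma_\alpha$) for the inequality $\sigma_\alpha \leq \sigma_\beta$, which is why the interval is taken to start at $\alpha$.
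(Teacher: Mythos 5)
Your proof is correct, but it takes a genuinely different (and more elementary) route than the paper's. Under the paper's definition of $\sigma_\gamma$ as the least ordinal strictly above $\gamma$ with $L_{\sigma_\gamma}\prec_{\Sigma_1}L$, both inequalities do follow from the two minimality observations you make: $\sigma_\alpha$ is itself a stable ordinal above $\beta$ (giving $\sigma_\beta\leq\sigma_\alpha$), and $\sigma_\beta$ is a stable ordinal above $\alpha$ (giving $\sigma_\alpha\leq\sigma_\beta$). The paper treats only the second of these as immediate and proves $\sigma_\beta\leq\sigma_\alpha$ by a detour through the characterization in Lemma \ref{sigma alpha}: it takes an arbitrary $\rho<\beta$, places it below a minimal witness level $L_{\beta'}$ for a formula with a parameter $\xi<\alpha$, and uses the fine-structural fact that $L_{\beta'}$ is the $\Sigma_1$-hull of $\xi+1$ in itself to conclude that $\rho$ is the unique witness of a formula with parameters below $\alpha$; hence formulas with parameters below $\beta$ contribute no new minimal witnesses. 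Your argument is shorter and uses nothing beyond the definition, and you are right that Lemma \ref{sigma alpha} is not needed for the statement as given. What the paper's longer argument buys is the stronger, reusable fact that every ordinal in $[\alpha,\sigma_\alpha)$ is $\Sigma_1$-definable from parameters below $\alpha$, which is the substance behind Lemma \ref{sigma alpha} and is implicitly exploited later (for instance in Lemma \ref{lome wITRM}, where levels $L_\gamma$ with $\gamma\in(\alpha,\sigma_\beta)$ are chosen as minimal witnesses in small parameters). For the lemma as stated, your direct argument suffices.
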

\begin{proof}
It is clear that $\sigma_{\alpha}\leq\sigma_{\beta}$. 

For the converse, let $\rho<\beta$ and pick $\beta^{\prime}>\rho$ such that, for some formula $\phi$ and some $\xi<\alpha$, $\beta^{\prime}$ is minimal with $L_{\beta^{\prime}}\models\phi(\xi)$. By a standard fine-structural argument, $L_{\beta^{\prime}}$ is the $\Sigma_{1}$-hull of $\xi+1$ in $L_{\beta^{\prime}}$. Thus, $\rho$ is the minimal witness for some such formula $\psi$ in some parameter $\zeta<\xi+1$. It follows that $\rho$ is the unique witness $x$ of the formula ``There is a minimal $L$-level $L_{\gamma}$ that satisfies $\phi(\xi)$ and in $L_{\gamma}$, $x$ is minimal such that $\psi(x,\zeta)$''. Consequently, every $\in$-formula in the parameter $\rho$ is equivalent to some $\in$-formula using only parameters less than $\alpha$. Thus $\sigma_{\beta}\leq\sigma_{\alpha}$.
\end{proof}

For the parameter-free case, we will also need the following:

\begin{defini}{\label{parameter-free sigma}}
    For an ordinal $\alpha$, we denote by $\sigma^{\alpha}$ the supremum of ordinals $\beta$ which are minimal with the property that, for some $\Sigma_{1}$-formula $\phi$, $L_{\beta}\models\phi(\alpha)$. 
\end{defini}


\begin{defini} (Cf. \cite{CarlBook}, p. 34)
An ordinal $\alpha$ is called $\alpha$-(w)ITRM-singular if and only if there are an $\alpha$-(w)ITRM-program $P$ and an ordinal $\xi<\alpha$ such that, for some $\beta<\alpha$, $P$ computes a cofinal function $f:\beta\rightarrow\alpha$ in the parameter $\xi$. 
\end{defini}



We will make use of the following result of Boolos:

\begin{lemma}{\label{I am an L-level}} (\cite{Boolos}, Theorem $1^{\prime}$)
There is a parameter-free $\in$-formula $\phi$ such that, for a transitive set $X$, we have $X\models\phi$ if and only if $X$ is of the form $L_{\alpha}$ for some ordinal $\alpha$. We will call this sentence ``I am an $L$-level'' from now on. 
\end{lemma}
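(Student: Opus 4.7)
The plan is to construct $\phi$ using Gödel's $\Sigma_1$-definition of the constructible hierarchy. Recall the standard $\Sigma_1$-formula $\theta(y,\alpha)$ expressing ``$y = L_\alpha$'', which asserts the existence of a function $h$ with $\mathrm{dom}(h) = \alpha+1$ satisfying $h(0)=\emptyset$, $h(\beta+1)=\mathrm{Def}(h(\beta))$, $h(\lambda)=\bigcup_{\beta<\lambda} h(\beta)$ for limit $\lambda$, and $y=h(\alpha)$. The crucial fact is that $\theta$ is absolute between transitive models that contain a witnessing function $h$, and that the ``$\mathrm{Def}$'' clause is expressible via the $\Delta_1$ satisfaction predicate for set-sized structures.

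I would then take $\phi$ to be the conjunction of the following parameter-free $\in$-assertions (interpreted in a candidate transitive set $X$): (i) the ordinals form an initial segment well-ordered by $\in$; (ii) for every ordinal $\beta$ there is a set $z$ with $\theta(z,\beta)$, so that each $L_\beta$ indexed by an ordinal of $X$ is present as an element of $X$; and (iii) a closure clause that case-splits on whether the ordinals have a maximum: in the limit case, every element belongs to some such $z$; in the successor case, writing $\gamma$ for the largest ordinal, every element is a definable subset of the $z$ with $\theta(z,\gamma)$, expressed via the internal satisfaction predicate.

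Verification proceeds in two directions. For $X=L_\alpha$, each hierarchy restriction $h\upharpoonright(\beta+1)$ with $\beta+1<\alpha$ lies in $L_\alpha$, which witnesses (ii); clause (iii) is immediate from the recursion defining $L$, and (i) is automatic. Conversely, suppose $X$ is transitive and $X\models\phi$, and set $\mu = X\cap\mathrm{On}$. By (ii) combined with $\Sigma_1$-absoluteness of $\theta$ for transitive sets containing the witness, we obtain $L_\beta^V\in X$ for each $\beta<\mu$. If $\mu$ is a limit, (iii) and transitivity of $X$ then force $X=\bigcup_{\beta<\mu}L_\beta^V = L_\mu$; if $\mu=\gamma+1$, clause (iii) identifies $X$ with $\mathrm{Def}(L_\gamma^V) = L_{\gamma+1}$.

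The main obstacle is handling the limit and successor heights uniformly in a single parameter-free sentence, since in the successor case one cannot simply say ``I am the union of my $L$-levels'' (the top element $L_\gamma$ of $L_{\gamma+1}$ is not contained in any earlier $L_\beta$). Overcoming this requires the case split in (iii) and the use of internalized satisfaction to express $\mathrm{Def}$. A secondary subtlety is ensuring that the edge cases (the finite $L_n$, $L_\omega$, and other small levels that satisfy little set theory) still fall under $\phi$; I would handle this by phrasing the clauses so that they reduce to trivialities in these cases, exploiting that the witnessing functions $h\upharpoonright(\beta+1)$ for small $\beta$ are themselves in every sufficiently large $L_\alpha$.
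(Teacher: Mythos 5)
There is a genuine gap, and it sits exactly at the point that makes Boolos's theorem nontrivial. Your clause (ii) asserts $\exists z\,\theta(z,\beta)$ for \emph{every} ordinal $\beta$ of $X$, where $\theta$ demands a witnessing function $h$ with $\mathrm{dom}(h)=\beta+1$ and $h(\beta)=z$. Take $X=L_{\gamma+1}$ and $\beta=\gamma$. Although $L_\gamma\in L_{\gamma+1}$, no such witness $h$ can lie in $L_{\gamma+1}$: we would have $h\subseteq L_\gamma$, hence $(\gamma,h(\gamma))\in L_\gamma$, hence $h(\gamma)\in L_\gamma$, i.e.\ $\mathrm{rank}(h(\gamma))<\gamma$; but the recursion forces $h(\gamma)\supseteq h(\xi)$ for all $\xi<\gamma$ with the $h(\xi)$ strictly increasing, so $\mathrm{rank}(h(\gamma))\geq\gamma$ --- a contradiction. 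So $L_{\gamma+1}\not\models\phi$, and the left-to-right direction of the lemma already fails. Your own verification quietly restricts to ``$h\upharpoonright(\beta+1)$ with $\beta+1<\alpha$'', which skips precisely the failing case; and the rank computation shows the problem cascades, since $\langle L_\xi:\xi\leq\beta\rangle$ has rank about $\beta+3$ and therefore first appears several levels above $L_{\beta+1}$, so clause (ii) fails for the top few ordinals of every successor-height level. Clause (iii) in the successor case then refers to ``the $z$ with $\theta(z,\gamma)$'', which does not exist inside $X$, so the closure clause cannot repair this. You did flag the successor/limit asymmetry as the main obstacle, but you located it in the closure clause rather than in the existence clause, and the proposed case split does not address the actual failure.

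The fix is not a minor rephrasing: any encoding of the hierarchy by the literal sequence of levels runs into the same rank obstruction near the top. Boolos's construction (Theorem $1'$ of the cited paper) circumvents it by defining the ``order'' of a set and the relevant satisfaction/definability notions in a way that never requires the sequence $\langle L_\xi:\xi\leq\gamma\rangle$ to be an element of the model; this is the substantive content of the result. Note also that the paper offers no proof of this lemma --- it is imported verbatim from Boolos --- so if you want to include an argument you should either reproduce Boolos's actual construction or rework your clauses so that the witnessing objects provably live inside $L_{\gamma+1}$.
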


\section{$\alpha$-ITRMs}

In this section, we will consider recognizability of subsets of $\alpha$ for $\alpha$-ITRMs. In particular, we will prove that lost melodies exist for all exponentially closed $\alpha$.

We recall the following theorem from \cite{Koepkes Zoo II}; here, $\beta(\alpha)$ is the supremum of $\alpha$-ITRM halting times. (ZF$^{-}$ denotes Zermelo-Fraenkel set theory without the power set axiom; see \cite{GHJ} for a discussion of the axiomatizations.)

\begin{thm}{\label{itrm comp strength}}
For every $\alpha$, there is an ordinal $\gamma$ such that $x\subseteq\alpha$ is $\alpha$-ITRM-computable if and only if $x\in L_{\gamma}$. For every $\alpha$, $\gamma$ is smaller than the next $\Sigma_2$-admissible ordinal after $\alpha$. If $L_{\alpha}\models\text{ZF}^{-}$, then $\gamma=\alpha+1$. If $L_{\alpha}\not\models\text{ZF}^{-}$, then $\gamma=\beta(\alpha)$. 
\end{thm}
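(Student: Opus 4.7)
The plan is to identify $\gamma = \beta(\alpha)$ as (roughly) the supremum of halting times of all halting $\alpha$-ITRM computations on oracles $y \subseteq \alpha$, possibly shifted up to absorb one level of definability, and then establish the two inclusions and the admissibility bound separately. The strong loop theorem cited just above will be the workhorse throughout, since it reduces non-halting of an $\alpha$-ITRM to an eventually observable event.

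For the upper bound (computable implies in $L_{\gamma}$): let $x \subseteq \alpha$ be computed by an $\alpha$-ITRM-program $P$, so for each $\iota < \alpha$ the computation $P(\iota)$ halts at some stage $\tau_{\iota}$. Detection of halting is uniform in $\iota$ and the successor step of the state-evolution function is $\Delta_1$, while the liminf-based limit step together with resetting is $\Sigma_1$. Choosing $\gamma$ above $\sup_{\iota<\alpha}\tau_{\iota}$ makes the map $\iota \mapsto \chi_{x}(\iota)$ uniformly $\Sigma_{1}^{L_{\gamma}}$-definable, placing $x \in L_{\gamma}$ after a bounded shift.

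For the lower bound (elements of $L_{\gamma}$ are computable): given $x \in L_{\gamma} \cap \mathfrak{P}(\alpha)$, I would program an $\alpha$-ITRM to iteratively produce nice codes (Definition \ref{nice code}) for successive $L$-levels, exploiting the convention $\iota \mapsto \omega\iota$ for constant-time ordinal extraction. Once the first level $L_{\delta}$ containing a nice code for $x$ has been produced, $\chi_{x}(\iota)$ is read off directly. The register-capacity bound $<\alpha$ forces all stored codes to live inside individual registers, but this suffices because Boolos's sentence from Lemma \ref{I am an L-level} lets the machine verify that it has actually built a correct $L$-level, and strong-loop detection on an auxiliary comparison subroutine terminates the iteration at the right stage. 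Relative to an oracle for $x$ on a side-channel register, this is the standard ORM-style recursion from \cite{ORM} cut down to register-bounded space.

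For the bound $\gamma <$ next $\Sigma_{2}$-admissible above $\alpha$: the statement ``$P$ halts on oracle $y$ at stage $\tau$'' is $\Sigma_{1}$, and ``$P$ does not halt'' is $\Sigma_{2}$ by existence of a strong loop, so the halting times form a $\Sigma_{2}$-definable collection and cannot reach the next $\Sigma_{2}$-admissible ordinal above $\alpha$. In the case $L_{\alpha} \models \text{ZF}^{-}$, the necessary $\Sigma_{2}$-reflection is available inside $L_{\alpha}$, which forces all halting times below $\alpha$ and makes $x$ a $\Sigma_{1}$-definable subset of $L_{\alpha}$, hence $\gamma = \alpha + 1$. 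The main obstacle I expect is the lower bound: constructing an $\alpha$-ITRM with only finitely many registers of capacity $<\alpha$ that faithfully iterates the $L$-construction level by level and stops at precisely the right level requires carefully nested strong-loop detection inside auxiliary subroutines, since the machine never has room to store an entire $L$-level at once and must recompute fragments on demand while staying within its configuration bound.
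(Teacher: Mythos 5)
The paper does not prove this theorem from scratch: its ``proof'' is a citation, assembling Theorem~1 and Theorem~42 of \cite{Koepkes Zoo II} (which identify $\gamma$ as $\alpha+1$ when $L_{\alpha}\models\mathrm{ZF}^{-}$ and as the supremum of the $\alpha$-ITRM-clockable ordinals otherwise) with Corollary~3.4.13 of \cite{CarlBook} for the $\Sigma_{2}$-admissibility bound. You are attempting to reprove the main theorem of a separate paper, and your sketch has genuine gaps in exactly the place you flag as the main obstacle.

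The most serious problem is the lower bound. A nice code for an $L$-level $L_{\delta}$ with $\delta\geq\alpha$ is a subset of $\alpha$, and a register holds a single ordinal strictly below $\alpha$; so your claim that ``the register-capacity bound forces all stored codes to live inside individual registers, but this suffices'' is false on its face, and you contradict it yourself in the final sentence. The actual argument in \cite{Koepkes Zoo II} cannot store or ``produce'' codes at all; it proceeds by an induction on clockable ordinals, using computable truth predicates for coded structures and a careful analysis of how far the bootstrapping reaches, and none of that is supplied by Boolos's sentence plus strong-loop detection. Second, and independently, your two directions do not meet: you define $\gamma$ in the upper bound as (a shift of) the supremum of halting times, but you never argue that every $x\in L_{\gamma}\cap\mathfrak{P}(\alpha)$ for \emph{that same} $\gamma$ is computable. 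Establishing that the upper and lower bounds coincide at a single ordinal is precisely the content of the theorem; without it you only get two inclusions at possibly different levels. Finally, in the $\mathrm{ZF}^{-}$ case your claim that all halting times are forced below $\alpha$ is doubtful for resetting machines --- the paper itself later uses the bound $\alpha^{n}$ (ordinal exponentiation, exceeding $\alpha$) on halting times of $n$-register programs when $\alpha$ is regular in $L$ --- so the conclusion $\gamma=\alpha+1$, while correct, needs the pull-back argument rather than a sub-$\alpha$ halting-time bound. The $\Sigma_{2}$-admissibility paragraph is the one part of your sketch that matches the cited argument in spirit.
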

\begin{proof}
It is shown in \cite{Koepkes Zoo II} that $\gamma=\alpha+1$ if and only if $L_{\alpha}\models$ZF$^{-}$ (Theorem $1$) and that $\gamma$ is the supremum of the $\alpha$-ITRM-clockable ordinals otherwise (Theorem $42$). The upper bound is Corollary 3.4.13 of \cite{CarlBook}. 
\end{proof}

It was shown in \cite{Koepkes Zoo II} that, for $\alpha$ ITRM-singular, there is a lost melody for $\alpha$-ITRMs, namely the halting set (encoded as a subset of $\alpha$). We start by showing that the extra condition is in fact unnecessary and that there are constructible lost melodies for all exponentially closed ordinals $\alpha$.\footnote{The condition of exponential closure is a technical convenience; it allows us, for example, to carry out halting algorithms after each other or run nested loops of algorithms without caring for possible register overflows. We conjecture that dropping this condition would not substantially change most of the results, but merely lead to more cumbersome arguments.} 


We also recall the following generalization of results by Koepke and Seyfferth \cite{KS}, which is Theorem 2.3.28(iii) of \cite{CarlBook}.

\begin{lemma}{\label{bounded truth}}
For every exponentially closed $\alpha$ and any $n\in\omega$, there is an $\alpha$-ITRM-program $P_{\alpha-\text{ntruth}}$ such that, for every formula $\phi$ that starts with $n$ quantifier alternations, followed by a quantifier-free formula and every $x\subseteq\alpha$, $P_{\alpha-\text{ntruth}}^{x\oplus s}(\phi,x)\downarrow=1$ if and only if $\phi$ holds in the structure coded by $x$, and otherwise, $P_{\alpha-\text{ntruth}}^{x\oplus s}\downarrow=0$. 
\end{lemma}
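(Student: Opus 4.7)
The plan is to proceed by induction on $n$. For the base case $n = 0$, I would evaluate a quantifier-free formula $\phi(\xi_1, \ldots, \xi_k)$ by a straightforward recursion on its Boolean structure; an atomic subformula $v_i E v_j$, once the values assigned to $v_i$ and $v_j$ are loaded into registers, reduces to a single oracle query for membership of $p(v_i, v_j)$ in $x$. The whole evaluation requires no unbounded search, so a constant number of registers (depending only on the formula) suffices.

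For the inductive step, assume a program $P_n$ is already available that correctly evaluates all prenex formulas with at most $n$ quantifier alternations (in either polarity). Given $\psi(\bar{y}) \equiv Q\xi\,\phi(\xi, \bar{y})$ with $\phi$ of lower complexity, I would loop a counter register $R$ through the ordinals below $\alpha$ by successive incrementation, and at each step invoke $P_n$ on $\phi(R, \bar{y})$ with the current value of $R$ plugged in for $\xi$. The program halts with the witnessing output as soon as it is observed: output $1$ if $Q = \exists$ and some call to $P_n$ returns $1$, or $0$ if $Q = \forall$ and some call returns $0$.

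The hard part will be halting with the complementary output in the case that no witness, respectively no counterexample, is ever found. This is where the resetting behavior of $\alpha$-ITRMs, in contrast to $\alpha$-wITRMs, becomes essential. Alongside $R$ I would maintain a dedicated flag register $F$ that is set to $1$ once at the start of the outer loop and is never touched by any inner call to $P_n$. Once $R$ has been incremented through all of $\alpha$, the liminf rule resets $R$ to $0$; meanwhile $F$ retains the constant value $1$, and, provided that the outer check-and-increment code occupies the lowest-numbered program lines, the active program line is also returned to this code at the same limit stage. The test ``$R = 0$ and $F = 1$'' then signals that the search through $\alpha$ has been exhausted, and the program halts with the complementary output.

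Exponential closure of $\alpha$ guarantees that the nested invocations of $P_n$ inside the outer loop do not drive any auxiliary register to overflow prematurely, so the induction proceeds cleanly; passing the free variables $\bar{y}$ as ordinal parameters through the layers of recursion is routine bookkeeping. This mirrors the strategy of Koepke and Seyfferth \cite{KS}, with the adjustments needed for exponentially closed $\alpha$ being as in Theorem 2.3.28(iii) of \cite{CarlBook}.
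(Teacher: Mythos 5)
The paper does not actually prove this lemma: it is recalled verbatim from \cite{CarlBook}, Theorem 2.3.28(iii) (generalizing Koepke--Seyfferth \cite{KS}), so there is no in-paper proof to compare against. Your sketch does reconstruct the two essential ingredients of the standard argument: induction on quantifier complexity, and exhaustive search through $\alpha$ implemented by letting a counter overflow and reset, with a flag register together with the liminf rule on program lines used to detect that the search has been completed. That mechanism is correct as you describe it.

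There is, however, one genuine gap: uniformity in $\phi$. The lemma asserts a single program $P_{\alpha-\text{ntruth}}$, with a fixed finite number of registers, that takes the formula $\phi$ as an input and works for every formula with $n$ quantifier alternations -- including formulas whose initial block of like quantifiers is arbitrarily long. Your inductive step peels off one quantifier at a time and allocates a fresh counter register for it; but if $\psi$ begins with a block $\exists\xi_{1}\dots\exists\xi_{k}$, then stripping $\exists\xi_{1}$ does not lower the number of alternations, so the induction hypothesis does not apply to the remainder, and descending through the block one quantifier at a time makes the register count depend on $k$ rather than on $n$. The standard repair is to treat a whole homogeneous block as a single quantifier over codes of finite tuples: since $\alpha$ is exponentially closed, a finite sequence of ordinals below $\alpha$ is coded by a single ordinal below $\alpha$ via iterated Cantor pairing, and the padding trick of fixing a large, never-changed first component (as in the proof of Lemma \ref{identification lemma}) keeps this coding compatible with the liminf rule; the entire current variable assignment is then carried in one register per alternation level, and the induction on $n$ yields a program uniform in $\phi$. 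A final small point: for resetting machines an overflow is harmless, so exponential closure is not needed to ``prevent overflows'' in the nested calls, but rather to keep the tuple codes below $\alpha$ and to make the bookkeeping of nested searches go through.
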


\begin{lemma}{\label{find minimal parameter}}
Let $\alpha$ be exponentially closed, $\gamma\in\text{On}$, and let $c\subseteq\alpha$ be a nice $\alpha$-code for $L_{\gamma}$ with corresponding bijection $f:\alpha\rightarrow L_{\gamma}$. Moreover, let $\phi$, $\psi$ be $\in$-formulas. 
\begin{enumerate}
    \item The set $\{\xi<\alpha:\text{Def}(L_{\gamma},\phi,f(\xi))\models\psi\}$ 
    is $\alpha$-ITRM-decidable relative to $c$ in the parameter $\zeta$.
    \item If $X\subseteq\mathfrak{P}(\alpha)$ is $\alpha$-ITRM-decidable, then $f^{-1}(\text{min}\{\xi<\gamma:\text{Def}(L_{\gamma},\phi,\xi)\in X\})$ is uniformly $\alpha$-ITRM-computable without parameters in the oracle $c$.
\end{enumerate}
\end{lemma}
\begin{proof}
    \begin{enumerate}
        \item By Lemma \ref{bounded truth}, there is an $\alpha$-ITRM-program $P$, which, for any $\xi<\alpha$, computes $c_{\xi}:=\{\iota<\alpha:f(\iota)\in \text{Def}(L_{\gamma},\phi,f(\xi))\}$ in the oracle $c$. By niceness of $c$, it is easy to check whether all elements of $c_{\xi}$ code elements of $\alpha$ (one only needs to check whether $c_{\xi}$ consists of limit ordinals). If that is the case, then, from $c_{\xi}$, one can compute $c^{\prime}_{\xi}:=\{\iota<\alpha:\omega\iota\in c_{\xi}\}$, and then, again by Lemma \ref{bounded truth}, one can check whether the structure coded by $c_{\xi}$ is a model of $\psi$.
        \item This can be done by two nested searches through $\alpha$: For each $\zeta<\alpha$, first check, using Lemma \ref{bounded truth}, whether $\zeta$ codes an ordinal in $c$, then compute $c_{\zeta}:=\text{Def}(L_{\gamma},\phi,f(\zeta))$ (again using Lemma \ref{bounded truth} to evaluate $\phi$), then decide whether $c_{\zeta}\in X$.\footnote{What we really mean by this is that we run the decision algorithm for $X$ and, whenever a bit of the oracle is requested, we use the algorithm computing $c_{\zeta}$ to compute the relevant bit. It is, however, heuristically much easier, and also harmless, to think about this as a two-staged process in which $c_{\zeta}$ is first computed and then the decision algorithm is applied to it. We will use this manner of speaking in the following without further discussion.} If any of these is not the case, we continue with $\zeta+1$. Otherwise, we run through all $\iota<\alpha$, checking first whether $f(\iota)$ is an ordinal such that $f(\iota)<f(\zeta)$ and then whether $c_{\iota}\in X$. If such a $\iota$ is found, we continue with $\zeta+1$. Otherwise, we halt with output $\zeta$.
    \end{enumerate}
\end{proof}

\begin{lemma}{\label{wo-check}}
Let $\alpha$ be exponentially closed.
\begin{enumerate}
    \item If $\alpha$ is not a regular cardinal in $L$, then there are a constructible set $s\subseteq\alpha$ and a program $P_{\alpha-\text{WO}}$ such that, for all $x\subseteq\alpha$, $P_{\alpha-\text{WO}}^{x\oplus s}\downarrow=1$ if and only if $x$ codes a well-ordering, and otherwise, $P_{\alpha-\text{WO}}^{x\oplus s}\downarrow=0$. In fact, $s$ can be taken to be $\alpha$-ITRM-recognizable without parameters.
    \item (\cite{CarlBook}, Exercise 2.3.26) If $\alpha=\omega$ or $\text{cf}^{V}(\alpha)>\omega$, 
    then there is such a program $P_{\alpha-\text{WO}}$ that works in the empty oracle.
\end{enumerate}
\end{lemma}
\begin{proof}
By simply searching through $\alpha$, it is easy to check whether a given $x\subseteq\alpha$ codes a linear ordering. We are thus left with checking whether this ordering is well-founded.  
\begin{enumerate}
    \item In [\cite{CarlBook}, Theorem 2.3.25], the well-foundedness test on ITRMs by Koepke and Miller \cite{KM} is generalized to $\alpha$-ITRMs when $\alpha$ is ITRM-singular, i.e., there is an $\alpha$-ITRM-computable cofinal function $f:\beta\rightarrow\alpha$ for some $\beta<\alpha$. Since we assume that $\alpha$ is not a regular cardinal in $L$, there is a constructible cofinal function $f:\beta\rightarrow\alpha$ for some $\beta<\alpha$. Let $s=\{p(\iota,f(\iota))|\iota<\beta\}$. Then $\alpha$ is ITRM-singular in the oracle $s$, and one can easily check that the argument in \cite{CarlBook} does not depend on whether the program computing the singularization uses an oracle. 

    To see that we can take $s$ to be $\alpha$-ITRM-recognizable without parameters, let $\gamma$ be minimal 
    with the property that $L_{\gamma}\models$``$\alpha\text{ is singular.}$'' 
    It follows by fine-structure that $L_{\gamma+1}$ contains a bijection $f:\alpha\rightarrow L_{\gamma}$ and thus, by Lemma \ref{if code then nice} a very nice code $c$ for $L_{\gamma}$. It is thus definable over $L_{\gamma}$, and we can assume without loss of generality that the definition uses a single ordinal parameter $\rho<\gamma$. Let $c=\{x\in L_{\gamma}:L_{\gamma}\models\phi(x,\rho)\}$. Without loss of generality, assume that $\rho$ is the minimal ordinal $\xi$ such that $\{x\in L_{\gamma}:L_{\gamma}\models\phi(x,\xi)\}$ is a code for $L_{\gamma}$. We claim that $c$ is $\alpha$-ITRM-recognizable without parameters.

    So let $x\subseteq\alpha$ be given in the oracle. By two nested searches through $\alpha$, we check whether, for all $\iota,\xi<\alpha$, we have $p(\iota,\omega\xi)\in x$ if and only if $\iota$ is of the form $\omega\bar{\iota}$ for some $\bar{\iota}<\xi$. We then check whether, for all $\iota<\alpha$, $p(\iota,1)$ if and only if $\iota$ is a limit ordinal. If not, $x$ cannot be a very nice $\alpha$-code, and we halt with output $0$. Otherwise, we check, using Lemma \ref{bounded truth}, whether the $\in$-structure $(X,\in)$ coded by $x$ is a model of $V=L$, of ``$\alpha$ is singular'' and of ``There is no $\delta$ such that $L_{\delta}\models$``$\alpha\text{ is singular}$''''. If not, then $x$ is not a code for $L_{\gamma}$, and we halt with output $0$. (Note that, at this point, we have not yet checked the well-foundedness of $(X,\in)$.) By searching through $\alpha$, we identify the minimal $\iota$ that codes a cofinal function $f:\beta\rightarrow\alpha$ for some $\beta<\alpha$ in $x$. More precisely, we check, for each $\sigma<\alpha$, whether $\sigma$ codes a set of ordered pairs, whether all first elements of such pairs are below some $\beta<\alpha$ (this is possible by searching through all such $\beta$), whether the set of these first elements is downwards closed and whether, for each $\delta<\alpha$, there is a second element of such a pair that is larger (which is again possible by searching through $\alpha$). This search is guaranteed to be successful, as $(X,\in)$ believes $\alpha$ to be singular. Once such a $\sigma$ has been found, we stop the search and continue. Note that at this point we know that $\alpha+1$ belongs to the well-founded part of $(X,\in)$; it follows that $\sigma$ codes an actual singularization $f$ of $\alpha$. By another search, we can identify the ordinal $\beta$ that is the domain of $f$. Now $f$ is $\alpha$-ITRM-computable from $x$: Namely, given $\iota<\beta$, we search through $\alpha$ for an element $\xi$ such that $p(p(\omega\iota,\omega\xi),\sigma)\in x$. 

    As described above, we can now use $f$ to perform well-foundedness checks on subsets of $\alpha$. In particular, once we have finished the proof of the recognizability of $c$, we know how to perform well-foundedness checks in the oracle $c$.

    In particular, we can now use $f$ to perform a well-foundedness check on $x$. If this is successful, we know that $x$ is a very nice code for $L_{\gamma}$ (otherwise, we halt with output $0$). Let $g:\alpha\rightarrow L_{\gamma}$ be the corresponding bijection. It remains to see whether $x=c$.

    Moreover, we can use $f$, along with the above procedure, to decide the set of very nice codes for $L_{\gamma}$. Let $P_{\text{vn}}$ be the $\alpha$-ITRM-program that does this. 
    


    By Lemma \ref{find minimal parameter}, we can now identify the ordinal $\xi<\alpha$ such ghat $g(\xi)$ is the minimal ordinal for which $\text{Def}(L_{\gamma},\phi,g(\xi))$
     is a very nice $\alpha$-code for $L_{\gamma}$. 
    
    By definition, we have $\text{Def}(L_{\gamma},\phi,g(\xi))=c$.
    Using Lemma \ref{bounded truth} and the niceness of $x$, we can now compute $c$ from $x$; by comparing it bitwise to $x$, we finally determine whether $x=c$.  

    


    \item For $\alpha=\omega$, this is proved in Koepke and Miller \cite{KM}. We now assume that $\alpha$ is uncountable.\footnote{The following argument is sketched in the hint to Exercise 2.3.26 in \cite{CarlBook} and his given here for the convenience of the reader.} 
    By assumption, every countable sequence of elements of $\alpha$ 
    is bounded in $\alpha$. It thus suffices to search through all sequences bounded by $\beta$, for all elements of an unbounded set of ordinals $\beta$ below $\alpha$. Since $\alpha$ is exponentially closed, it suffices to consider multiplicatively closed values of $\beta$. To this end, count upwards to $\alpha$ in some separate register $R$ and consider the multiplicatively closed ordinals $\beta$ occuring in $R$. By \cite{CarlBook}, Theorem 2.3.25(ii), there is, for each such $\beta$, an $\alpha$-wITRM-program $P$ that checks subsets of $\beta$ for coding well-orderings. Moreover, this program is uniform in $\beta$. 
    
    
    \end{enumerate}
\end{proof}

\begin{remark}
    Note that, if one assumes $V=L$, one of the cases of Lemma \ref{wo-check} will be true for any exponentially closed $\alpha\in\text{On}$. The only problematic case regular $L$-cardinals that have cofinality $\omega$ in $V$. We do not know whether a well-foundedness test exists for such ordinals.
\end{remark}

\begin{corollary}{\label{check nice names}}
Let $\alpha$ be an exponentially closed ordinal.
\begin{enumerate}
 \item If $\alpha$ that is ITRM-singular, equal to $\omega$ or satisfies $\text{cf}^{V}(\alpha)>\omega$, there is an $\alpha$-ITRM-program $P_{\text{nice}}$ that decides $\text{nice}_{\alpha}$ without parameters. 
\item If $\alpha$ is not a regular cardinal in $L$, $\alpha=\omega$ or $\text{cf}^{V}(\alpha)>\omega$, there is a parameter-freely $\alpha$-ITRM-recognizable $s\subseteq\alpha$ such that, for some $\alpha$-ITRM-program $P_{\text{nice}}$, $P^{s}_{\text{nice}}$ decides $\text{nice}_{\alpha}$ without parameters. If $V=L$, this is true for all $\alpha$. 
\end{enumerate}    
\end{corollary}
\begin{proof}
\begin{enumerate}
    \item By Lemma \ref{wo-check}(2), it is possible to check whether $x$ codes a well-founded structure $(S,\in)$. By Lemma \ref{bounded truth} and Lemma \ref{I am an L-level}, it is possible to check whether $(S,\in)$ is an $L$-level. To check whether $x$ is also nice, run through $\alpha$ and check, for each $\iota,\xi<\alpha$ whether $p(\iota,\omega\xi)\in x$ if and only if $\iota$ is a limit ordinal smaller than $\omega\xi$.
    \item The possibility of a well-foundedness check relative to some $\alpha$-ITRM-recognizable $s\subseteq\alpha$ is now guaranteed by Lemma \ref{wo-check}(1). The rest then works as in (1). 
    If $V=L$, then every exponentially closed ordinal is either equal to $\omega$, not a regular cardinal in $L$ or has a cofinality $>\omega$, so one of the conditions is met.
\end{enumerate}
\end{proof}

We will freely use the following result:

\begin{corollary}
    Let $\alpha$ be exponentially closed. There is an $\alpha$-ITRM-program $P$ such that, for any $x\subseteq\alpha$ that nicely codes a transitive $\in$-structure containing $\alpha$, $P^{x}$ halts with output $\iota$, where $\iota$ codes $\alpha$ in $x$. In particular, whenever there is a program for deciding the set of nice names, there is also one for deciding very nice names.
\end{corollary}
\begin{proof}
    $P$ works by running through $\alpha$, checking, for each $\xi<\alpha$, whether $p(\iota,\xi)\in x$ if and only if $\iota$ is a limit ordinal. Once such $\xi$ has been found, $P$ writes it to the output register and halts.
\end{proof}

\begin{lemma}{\label{minimal name identification}}
Let $\alpha$ be exponentially closed 
and let $\gamma$ be an ordinal such that $c_{\gamma}$ exists, $c_{\gamma}\in (L_{\gamma+1}\setminus L_{\gamma})$ and 
such that $\text{nice}_{\alpha}(\gamma)$ is $\alpha$-ITRM-decidable (possibly in some oracle $s$).
Then some element of 
$(L_{\gamma+1}\setminus L_{\gamma})\cap\text{nice}_{\alpha}(\gamma)$ is $\alpha$-ITRM-recognizable (in the oracle $s$). If $\text{nice}_{\alpha}(\gamma)$ is $\alpha$-ITRM-decidable without parameters, then there is an element of $(L_{\gamma+1}\setminus L_{\gamma})\cap\text{nice}_{\alpha}(\gamma)$ that is $\alpha$-ITRM-recognizable without parameters.
\end{lemma}
\begin{proof}
    Let $P$ be an $\alpha$-ITRM-program that decides $\text{nice}_{\alpha}(\gamma)$. 
    By assumption, there is a formula $\psi^{\prime}$ such that $c_{\gamma}=\text{Def}(L_{\gamma},\psi^{\prime},\vec{p})$ 
     for some finite $\vec{p}\subseteq L_{\gamma}$. It is well-known that this implies the existence of a formula $\psi$ such that $c_{\gamma}=\text{Def}(L_{\gamma},\psi,\vec{p})$
      for some finite $\vec{p}\subseteq \gamma$. To simplify our notation, we assume that $\vec{p}$ consists of a single element ordinal. Let $\rho<\alpha$ be the minimal ordinal such that $c:=\text{Def}(L_{\gamma},\psi,\rho)
      \in \text{nice}_{\alpha}(\gamma)$. We claim that $c$ is $\alpha$-ITRM-recognizable.

    To see this, let $x\subseteq\alpha$ be given in the oracle. Using $P$, we first decide whether $x\in\text{nice}_{\alpha}(\gamma)$. If not, we halt with output $0$. Otherwise, we know that $x$ nicely codes $L_{\gamma}$. We run through $\alpha$ in some register. For each $\iota<\alpha$, let $x_{\iota}$ be the element of $L_{\gamma}$ coded by $\iota$ in $x$. 
    Using Lemma \ref{bounded truth}, we can check whether $x_{\iota}$ is an ordinal. If not, we continue with $\iota+1$. If it is, we again use Lemma \ref{bounded truth} to check whether $d_{\iota}:=\text{Def}(L_{\gamma},\psi,x_{\iota})
    \subseteq\alpha$: More precisely, we use $P_{\alpha-\text{ntruth}}$ for some sufficiently large $n$ to decide, for each $\xi<\alpha$, whether or not $L_{\gamma}\models\psi(x_{\xi},x_{\iota})$, and if that is true, we check whether $\xi$ is a limit ordinal. If that is false for any $\xi$, we halt with output $0$. Otherwise, we know that $d_{\iota}\subseteq\alpha$ and we can again use $P$ to check whether $d_{\iota}\in\text{nice}_{\alpha}(\gamma)$. If not, we continue with $\iota+1$. Otherwise, we must have $x_{\iota}=\rho$, and all that remains to check is whether $d_{\iota}=x$. If that is the case, we halt with output $1$, otherwise, we halt with output $0$. 
    
\end{proof}

\begin{lemma}{\label{decidable model classes}}
Let $\alpha$ be an exponentially closed ordinal, and let $n\in\omega$. Let $\Theta$ be an $\alpha$-ITRM-decidable set of $\Sigma_{n}$-sentences using parameters $\leq\alpha$, and let $\gamma>\alpha$ be minimal such that $L_{\gamma}\models\Theta$. Let $s\subseteq\alpha$ be such that $P_{\text{nice}}$ works in the oracle $s$ (which includes the possibility that $s=\emptyset$). Then $\text{nice}_{\alpha}(\gamma)$ is $\alpha$-ITRM-decidable in the oracle $s$. If $\Theta$ only uses the parameter $\alpha$ and $P_{\text{nice}}$ works parameter-freely, then $\text{nice}_{\alpha}(\gamma)$ is $\alpha$-ITRM-decidable (in the oracle $s$) without parameters.
\end{lemma}
\begin{proof}
Let $x\subseteq\alpha$ be given. Using $P_{\text{nice}}$ (and the oracle $s$, if necessary), we can check whether $x$ is a nice code for some $L$-level $L_{\delta}$. By running through all $\iota<\alpha$ and checking for each whether, for all $\xi<\alpha$, $p(\xi,\iota)\in x$ if and only if $\xi$ is a limit ordinal, we can identify the ordinal $\iota_{\alpha}<\alpha$ which codes $\alpha$ in the sense of $x$ (if no such $\iota_{\alpha}$ exists, then $\delta\leq\alpha<\gamma$, so we halt with output $0$). Using $P_{\alpha-\text{n-truth}}$, we can now check, for all $\theta\in\Theta$, whether $L_{\delta}\models\theta$; note that the ordinals encoding parameters $<\alpha$ can easily be found due to $x$ being nice, while the ordinal coding $\alpha$ is at this point known to be $\iota_{\alpha}$. If this is not the case for some $\theta$, we halt with output $0$. Otherwise, we halt with output $1$.    
\end{proof}

\begin{thm}{\label{itrm lost melodies}}
For every exponentially closed $\alpha$, there is a lost melody for $\alpha$-ITRMs, i.e., a set $x\subseteq\alpha$ such that $x$ is $\alpha$-ITRM-recognizable, but not $\alpha$-ITRM-computable. In fact, we can take $x$ to be $\alpha$-ITRM-recognizable without parameters.
\end{thm}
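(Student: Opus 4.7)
The plan is to take the lost melody to be a nice $\alpha$-code $c$ for $L_\gamma$, where $\gamma = \beta(\alpha)$ is the ordinal given by Theorem \ref{itrm comp strength}. Non-computability comes for free: Lemma \ref{no code for itself} forbids any nice $\alpha$-code of $L_\gamma$ from lying in $L_\gamma$ itself, and then Theorem \ref{itrm comp strength} gives that $c$ is not $\alpha$-ITRM-computable.

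The work is to show $c$ is recognizable. I would exhibit an $\alpha$-ITRM-program $P$ that, on oracle $y\subseteq\alpha$, halts with output $1$ iff $y=c$, by running the following tests in succession and accepting only if all pass. First, $P$ verifies that $y$ codes a linear order on $\alpha$ and that this order is well-founded, via Lemma \ref{wo-check}; the constructible singularisation witness needed in case (1) of that lemma lies in $L_\gamma$ and is therefore $\alpha$-ITRM-computable by Theorem \ref{itrm comp strength}, so it can be produced internally rather than supplied as a second external oracle. Second, $P$ checks that the coded structure $M$ is transitive, extensional and satisfies the niceness condition $p(\omega\iota,\omega\xi)\in y \iff \iota<\xi$ for $\iota,\xi<\alpha$, and then confirms via Lemma \ref{bounded truth} applied to Boolos's sentence from Lemma \ref{I am an L-level} that $M\cong L_\delta$ for some ordinal $\delta$.

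Third, $P$ verifies $\delta=\gamma$ using Lemma \ref{bounded truth} once more. The idea is to express, as an $\in$-formula of bounded quantifier complexity, that $L_\delta$ is the least $L$-level in which every $\alpha$-ITRM computation (program $Q$, input $\xi<\alpha$) is settled, meaning $L_\delta$ contains either a halting trace of $Q^{\xi}$ or a strong-loop witness $(\iota_1,\iota_2)$ in the sense of Definition~1. This condition picks out precisely $L_\gamma$: in the case $L_\alpha\not\models\mathrm{ZF}^{-}$, $\gamma$ is the supremum of the $\alpha$-ITRM-clockable ordinals by Theorem \ref{itrm comp strength}, so all halting traces and loop witnesses first become simultaneously available at $L_\gamma$; in the case $L_\alpha\models\mathrm{ZF}^{-}$, $\gamma=\alpha+1$, and the same characterisation picks out $L_{\alpha+1}$ as the least $L$-level containing all $\alpha$-ITRM-computable subsets of $\alpha$. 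Thus $P$ accepts iff $\delta=\gamma$ and the structure coded by $y$ is isomorphic, via the niceness requirement, to $L_\gamma$, which pins down $y=c$.

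The main obstacle is the third step: producing a single $\in$-formula, of quantifier complexity bounded independently of $\alpha$, that characterises $L_\gamma$ uniformly in the two regimes of Theorem \ref{itrm comp strength} so that Lemma \ref{bounded truth} applies. A secondary technical point is making the well-foundedness test of Lemma \ref{wo-check} available to $P$ uniformly across all exponentially closed $\alpha$; this is handled by showing that the required singularisation witness is itself $\alpha$-ITRM-computable and hence can be generated by $P$ without enlarging the oracle.
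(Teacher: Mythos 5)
Your choice of target level is where the proof breaks down. You aim to recognize a nice $\alpha$-code for $L_{\beta(\alpha)}$ itself, and you correctly identify the crux --- characterizing $L_{\beta(\alpha)}$ inside the coded structure by a single $\in$-formula of bounded quantifier complexity so that Lemma \ref{bounded truth} applies --- but you do not resolve it, and the characterization you sketch does not work. In the case $L_\alpha\models\mathrm{ZF}^-$ we have $\beta(\alpha)=\alpha+1$, yet halting computations may run for up to $\alpha^n$ many steps, so their traces are sets of ordinals unbounded in $\alpha^n$ and cannot lie in $L_{\alpha+1}$ at all; your ``every computation is settled in $L_\delta$'' condition therefore does not pick out $L_{\alpha+1}$, and you silently replace it by the different condition ``least level containing all $\alpha$-ITRM-computable subsets of $\alpha$'', which destroys the claimed uniformity. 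In the other case it is also not justified from Theorem \ref{itrm comp strength} (which only says that $\beta(\alpha)$ is the supremum of the clockable ordinals) that all strong-loop witnesses already appear by level $\beta(\alpha)$. The paper avoids this entire problem by not targeting $L_{\beta(\alpha)}$: when $\alpha$ is not regular in $L$ it codes the least $\Sigma_2$-admissible $L$-level $L_\delta$ with $\delta>\alpha$, which is characterized inside the coded structure simply by ``I am an $L$-level'' plus $\Sigma_2$-collection plus ``I contain no $\Sigma_2$-admissible $L$-level above $\alpha$'' --- a fixed finite-complexity condition --- and uses $\beta(\alpha)<\delta$ for non-computability; when $\alpha$ is regular in $L$ it targets $L_{\alpha+1}$ directly.

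There is a second gap: even granting that your program can verify that the oracle $y$ is a nice $\alpha$-code for the intended level $L_\gamma$, this does not ``pin down $y=c$''. Niceness only prescribes the images of the ordinals $\omega\iota$; distinct bijections $f:\alpha\to L_\gamma$ agreeing on these values still yield many distinct nice codes, all of which your program would accept, contradicting the recognizability of a single set. The paper handles this by fixing $c$ to be the $<_L$-minimal such code, which is then definable over the coded level by a formula $\phi$ in a parameter $\rho$, and adding a final pass that checks $\iota\in y\leftrightarrow L_\gamma\models\phi(\iota,\rho)$ bit by bit via Lemma \ref{bounded truth}; you need some such ``right code'' test. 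A smaller unjustified point is your claim that the singularization witness for Lemma \ref{wo-check}(1) lies in $L_{\beta(\alpha)}$ and is hence computable: $\alpha$ may first be singularized in $L$ well above $\beta(\alpha)$.
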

\begin{proof}
By Theorem \ref{itrm comp strength}, every $\alpha$-ITRM-computable $x\subseteq\alpha$ will be an element of $L_{\beta(\alpha)}$. It thus suffices to find a recognizable subset of $\alpha$ that is not contained in $L_{\beta(\alpha)}$. 

We split the proof into two cases, depending on whether or not $\alpha$ is a regular cardinal in $L$. 

\bigskip 

\textbf{Case 1}: $\alpha$ is not a regular cardinal in $L$. 

By Corollary \ref{check nice names}(2), pick a parameter-freely $\alpha$-ITRM-recognizable $s\subseteq\alpha$ (possibly empty) and an $\alpha$-ITRM-program $P_{\text{nice}}$ such that $P_{\text{nice}}^{s}$ decides $\text{nice}_{\alpha}$. 
We claim that there is a very nice $\alpha$-ITRM-recognizable $\alpha$-code $c$ for $L_{\beta(\alpha)}$. Let $\phi(\alpha)$ be the $\in$-sentence ``For each $\alpha$-ITRM-program $P$, $P$ either halts or runs into a strong loop''. Then $\beta(\alpha)$ is the minimal $\gamma$ with the property that $L_{\gamma}\models\phi(\alpha)$. By Lemma \ref{decidable model classes}, $\text{nice}_{\alpha}(\gamma)$ is decidable without parameters in the oracle $s$. 
Moreover, by fine-structure, $L_{\beta(\alpha)}$ is the $\Sigma_{2}$-Skolem hull of $\alpha+1$ in itself, so a bijection $f:\alpha\rightarrow L_{\beta(\alpha)}$ is definable over $L_{\beta(\alpha)}$ and thus, we have $\text{nice}_{\alpha}(\gamma)\cap (L_{\beta(\alpha)+1}\setminus L_{\beta(\alpha})\neq\emptyset$. By Lemma \ref{minimal name identification}, $\text{nice}_{\alpha}(\beta(\alpha))\cap (L_{\beta(\alpha)+1}\setminus L_{\beta(\alpha)})$ contains an element $c$ that is $\alpha$-ITRM-recognizable without parameters relative to $s$. 

But now, $s\oplus c$ is clearly $\alpha$-ITRM-recognizable: Given the input $x\subseteq\alpha$, first decompose $x:=x_{0}\oplus x_{1}$, then check whether $x_{0}=s$ and, if so, use $s$ to check whether $x_{1}=c$. Moreover, since $c\notin L_{\beta(\alpha)}$ by definition, we also have $s\oplus c\notin L_{\beta(\alpha)}$, so $s\oplus c$ is as desired.

\bigskip 

\textbf{Case 2}:\footnote{This case uses ideas similar to those used for proving the lost melody theorem for infinite time Blum-Shub-Smale machines, see \cite{itbm}.} $\alpha$ is a regular cardinal in $L$. 
In this case, we have $L_{\alpha}\models\text{ZF}^{-}$, and so it follows from Theorem \ref{itrm comp strength} that the $\alpha$-ITRM-computable subsets of $\alpha$ are exactly those in $L_{\alpha+1}$. By Lemma \ref{no code for itself}, it suffices to show that there is an $\alpha$-ITRM-recognizable nice $\alpha$-code for $L_{\alpha+1}$. 
Since $L_{\alpha+2}$ clearly contains a bijection $g:\alpha\rightarrow L_{\alpha+1}$, some very nice code for $L_{\alpha+1}$ is contained in $L_{\alpha+2}$ by Lemma \ref{if code then nice}, and thus definable over $L_{\alpha+1}$. 
Say such a code is defined over $L_{\alpha+1}$ by the formula $\phi$ in the parameter $\rho\in\alpha+1$.
Without loss of generality, we assume that $\rho$ is minimal with the property that $c:=\text{Def}(L_{\gamma+1},\phi,\rho)$ 
is a very nice code for $L_{\alpha+1}$.  
 Let $\zeta:=f^{-1}(\rho)$ (thus, either $\zeta=\omega\rho$ if $\rho<\alpha$ or $\zeta=1$ if $\rho=\alpha$). We claim that $c$ is $\alpha$-ITRM-recognizable without parameters. 

To see this, let $d\subseteq\alpha$ be given in the oracle. We start by checking, as in the proof of Lemma \ref{wo-check}, whether each $\iota<\alpha$ is coded by $\omega\iota$ and whether $\alpha$ is coded by $1$ in $d$. 

Using bounded truth predicate evaluation from Lemma \ref{bounded truth}, we can now run through $\alpha$ and check, for each $\iota<\alpha$, whether the $\in$-structure coded by $d$ believes that $\iota$ codes an ordinal if and only if $\iota$ is either $1$ or a limit ordinal. If not, we halt with output $0$; otherwise, we know that the set of ordinals coded by $d$ is equal to $\alpha+1$. 

Again using bounded truth predicate evaluation, we check whether the structure coded by $d$ is a model of the sentence ``I am an $L$-level''. If not, we halt with output $0$. Otherwise, we know that $d$ codes $L_{\alpha+1}$, and it remains to check that $d$ is the ``right'' code. But this can be checked as in the proof of Lemma \ref{wo-check}(1) by searching through $\alpha$ for the minimal ordinal parameter for which $\phi$ defines a very nice $\alpha$-code for $L_{\alpha+1}$, and, once it is found, using it to compute $c$ and compare it to $d$.


\end{proof}

\begin{remark}
Note that, in case (2), we have obtained a lost melody at the lowest possible $L$-level, since all elements of $L_{\alpha+1}$ are $\alpha$-ITRM-computable, and hence -recognizable by computing them and then comparing them to the oracle bit by bit.\footnote{As we will see in Corollary \ref{comp not recog} below, this procedure does no longer work for $\alpha$-wITRMs.}
\end{remark}

For reasons explained above, we will be concerned with constructible sets in this paper. It should, however, be noted that the relation between recognizability in $L$ and in $V$ is not trivial: On the one hand, recognizable sets can be non-constructible (see \cite{CSW}). On the other hand, it is not clear that a set that is recognizable in $L$ is also recognizable in $V$: If $P$ recognizes $x$ in $L$, then $V$ may contain some $y\neq x$ such that $P^{y}\downarrow=1$, thus spoiling $P$'s ability to recognize $x$. Even for constructible sets $x$, we thus need to carefully distinguish between being $\alpha$-ITRM-recognizable and being $\alpha$-ITRM-recognizable \textit{in $L$}.

\begin{question}
    Is there an ordinal $\alpha$ and a set $x\subseteq\alpha$ such that $x$ is $\alpha$-(w)ITRM-recognizable in $L$, but not in $V$?
\end{question}

Passing over to what in \cite{CSW} is called the ``recognizable closure'', we can exclude this possibility, at least in many cases. Let us say that $x\subseteq\alpha$ belongs to the $\alpha$-(w)ITRM-recognizable closure if and only if there is $y\subseteq\alpha$ such that $x\oplus y$ is $\alpha$-(w)ITRM-recognizable, and denote this by cRECOG$^{\text{(w)ITRM}}_{\alpha}$.


\begin{thm}{\label{recog closure}}
If $\alpha$ is exponentially closed and either not a regular cardinal in $L$ or satisfies $\text{cf}^{\text{V}}(\alpha)>\omega$, and $M\models\text{ZFC}$ is a transitive class, we have (cRECOG$^{\text{ITRM}}_{\alpha})^{L}\subseteq($cRECOG$^{\text{ITRM}}_{\alpha})^{M}$.
\end{thm}
\begin{proof}
   Let $\alpha$ be an exponentially closed ordinal, and let $x\in(\text{cRECOG}^{\text{ITRM}}_{\alpha})^{L}$. Thus, there is $y\in L$ such that $x\oplus y$ is $\alpha$-ITRM-recognizable in $L$. We show that, for some $c\in\mathfrak{P}^{L}(\alpha)$, $(x\oplus y)\oplus c$ is $\alpha$-ITRM-recognizable in $V$. It then follows by absoluteness of computations that $(x\oplus y)\oplus c$ is $\alpha$-ITRM-recognizable in any transitive $M\models\text{ZFC}$ with $L\subseteq M\subseteq V$. For simplicity of notation, let us denote $x\oplus y$ by $z$.

   So suppose that $z$ is $\alpha$-ITRM-recognizable in $L$ by the program $P$ in the parameter $\rho<\alpha$. Let $\phi(\alpha)$ be the sentence $\exists{a}P^{a}(\rho)\downarrow=1$, and let $\gamma>\alpha$ be minimal such that $L_{\gamma}\models\phi(\alpha)$. By absoluteness of computations, this implies $z\in L_{\gamma}$. By Lemma \ref{check nice names}, pick an $\alpha$-ITRM-recognizable $s\subseteq\alpha$ such that $P_{\text{nice}}$ works in the oracle $s$. The only place where $s$ is used in the proof of Lemma \ref{wo-check} is in checking well-foundedness, and the proof of Lemma \ref{wo-check} shows that $s$ is $\alpha$-ITRM-recognizable in $V$.

   By Lemma \ref{decidable model classes}, $\text{nice}_{\alpha}(\gamma)$ is $\alpha$-ITRM-decidable. By fine-structure and Lemma \ref{if code then nice}, there is a nice $\alpha$-code for $L_{\gamma}$ in $L_{\gamma+1}\setminus L_{\gamma}$. By Lemma \ref{minimal name identification}, there is $c\in \text{nice}_{\alpha}(\gamma)\cap L_{\gamma+1}$ which is $\alpha$-ITRM-recognizable (in the oracle $s$). It follows that $s\oplus c$ is $\alpha$-ITRM-recognizable, as in the proof of Theorem \ref{itrm lost melodies}. 
   Since $z\in L_{\gamma}$, there is some $\xi<\alpha$ which codes $z$ in the sense of $c$. But now, given the parameter $\xi$, $z$ is $\alpha$-ITRM-computable, and hence $\alpha$-ITRM-recognizable, relative to $s\oplus c$. Hence $z\oplus (s\oplus c)$ is $\alpha$-ITRM-recognizable, so $s\oplus c$ is as desired.

\end{proof}

\begin{defini}
Denote by $\rho(\alpha)$ the supremum of the set of ordinals $\beta$ for which $L_{\beta+1}\setminus L_{\beta}$ contains an $\alpha$-ITRM-recognizable subset of $\alpha$. Similarly, we write $\rho^{w}(\alpha)$ for the analogous concept for $\alpha$-wITRMs. 

Moreover, denote by $\theta(\alpha)$ and $\theta^{w}(\alpha)$ the suprema of ordinals with an $\alpha$-ITRM-computable and an $\alpha$-wITRM-computable $\alpha$-code, respectively. 
\end{defini}

\begin{remark}
It was shown in \cite{Koepkes Zoo II} that $\theta(\alpha)=\beta(\alpha)$ unless $L_{\alpha}\models\text{ZF}^{-}$.
\end{remark}

The only property of $\alpha+1$ used in the argument for the existence of a recognizable nice $\alpha$-code for $L_{\alpha+1}$ is the existence of an $\alpha$-ITRM-computable $\alpha$-code for it. The same argument hence yields: 

\begin{corollary}
If $\beta$ has an $\alpha$-ITRM-computable $\alpha$-code, then $L_{\beta}$ has an $\alpha$-ITRM-recognizable $\alpha$-code. In particular, we have $\rho(\alpha)\geq\theta(\alpha)$. 
\end{corollary}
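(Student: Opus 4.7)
The plan is to imitate Case 2 of the proof of Theorem \ref{itrm lost melodies}, replacing the role of $\alpha+1$ by $\beta$ and the implicit, trivial computable $\alpha$-code for $\alpha+1$ by the hypothesized $\alpha$-ITRM-computable $\alpha$-code $c_0$ for $\beta$. I would take $c$ to be the $<_L$-minimal nice $\alpha$-code for $L_{\beta}$ in which $\alpha$ is coded by $1$. A fine-structural definability argument places $c$ in $L_{\beta+k}$ for some small $k$, so $c$ is defined by an $\in$-formula $\phi$ with parameter $\rho$ over some slightly lower $L$-level; let $\zeta<\alpha$ be the $c$-code of $\rho$.

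The recognition algorithm, with $\zeta$ as its parameter, would run the same series of tests as Case 2 on oracle input $d\subseteq\alpha$: the syntactic niceness checks ($\omega\iota$ codes $\iota$ for each $\iota<\alpha$, and $1$ codes $\alpha$), well-foundedness via $P_{\alpha\text{-WO}}$ from Lemma \ref{wo-check}, the bounded-truth check (Lemma \ref{bounded truth}) that the coded structure models the sentence ``I am an $L$-level'' from Lemma \ref{I am an L-level}, a height test verifying that the coded $L$-level is precisely $L_{\beta}$, and finally the check that $\iota\in d$ iff $\phi(\iota,\rho)$ holds in the appropriate $L$-level (again via bounded truth, with $\rho$ accessed via $\zeta$).

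The sole step where a genuinely new ingredient enters, relative to Case 2, is the height test. In Case 2 this was carried out by checking that the ordinal-codes in $d$ form the syntactically simple set $\{1\}\cup\{\omega\iota:\iota<\alpha\}$, which works precisely because $\alpha+1$ has a trivial $\alpha$-code. For a general $\beta$ with computable code $c_0$, I would compute $c_0$ on the fly and test that the well-order induced by $d$ on its ordinal-codes has the same order type as the well-order coded by $c_0$. This reduces to an isomorphism test between two $\alpha$-coded well-orders, which is a standard consequence of Lemma \ref{wo-check} together with bounded truth predicate evaluation. The remainder of the verification is a transcription of the corresponding pieces of Case 2.

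For the ``in particular'' clause, the resulting $c$ satisfies $c\notin L_{\beta}$ by Lemma \ref{no code for itself}, so it witnesses the existence of a recognizable subset of $\alpha$ in some $L$-level strictly above $\beta$; taking suprema over all $\beta$ with $\alpha$-ITRM-computable $\alpha$-codes then yields $\rho(\alpha)\geq\theta(\alpha)$. I expect the isomorphism test for $\alpha$-coded well-orders to be the only step requiring serious care; everything else follows Case 2 essentially verbatim.
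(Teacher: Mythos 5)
Your proposal is correct and follows essentially the same route as the paper, which derives this corollary by observing that the only property of $\alpha+1$ used in Case 2 of Theorem \ref{itrm lost melodies} is the existence of an $\alpha$-ITRM-computable $\alpha$-code for it; your writeup instantiates exactly that argument, with the computable code $c_0$ replacing the trivial code of $\alpha+1$ in the height test via an order-isomorphism check. The one point to be careful about is that the chosen nice code for $L_{\beta}$ must be definable over $L_{\beta}$ from computable data (so that the final verification can be done by bounded truth evaluation on the oracle), which is most safely arranged by building the bijection $\alpha\rightarrow L_{\beta}$ directly from $c_0$ and the fine-structural surjection of $\beta$ onto $L_{\beta}$ rather than appealing only to $<_L$-minimality.
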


It is shown in \cite{LoMe} that $\rho^{w}(\omega)=\beta^{w}(\omega)$. 
We currently do not know whether $\rho(\alpha)=\beta(\alpha)$ for any exponentially closed $\alpha$ (it is known to be false for $\alpha=\omega$).


In the first case, however, we can also get more precise information about their distribution:\footnote{The following result are the analogues of results obtained about ITRMs in \cite{ITRM Recog 1} and \cite{ITRM Recog 2} for ITRM-singular $\alpha$.}


\begin{defini}
We say that $x\subseteq\alpha$ is $\alpha$-\textit{ITRM-quick} if and only if $x\in L_{\beta^{x}(\alpha)}$.
\end{defini}


\begin{thm}{\label{distribution}}
Let $\alpha$ be an exponentially closed ordinal that is not a regular cardinal in $L$ or satisfies $\text{cf}^{V}(\alpha)>\omega$.
\begin{enumerate}
    \item (Cf. \cite{ITRM Recog 1}, Theorem $27$(i) for the ITRM-version) The constructible subsets of $\alpha$ that are $\alpha$-ITRM-recognizable with parameters are contained in $L_{\sigma_{\alpha}}$, and those that are $\alpha$-ITRM-recognizable without parameters are contained in $L_{\sigma^{\alpha}}$. 
    \item (Cf. \cite{ITRM Recog 1}, Theorem $27$(ii) for the ITRM-version) $\sigma_{\alpha}$ (and $\sigma^{\alpha}$, respectively) are minimal with this property. Thus $\rho(\alpha)=\sigma_{\alpha}$. 
    \item (Cf. \cite{ITRM Recog 1}, Theorem $27$(iii)) For any $\delta<\sigma_{\alpha}$, there is a ``gap'' of length $\geq\delta$ in the $\alpha$-ITRM-recognizables; that is, there are ordinals $\beta,\gamma,\eta$ such that $\beta+\delta\leq\gamma<\eta$, $L_{\gamma}\setminus L_{\beta}$ contains no $\alpha$-ITRM-recognizable subsets of $\alpha$, $L_{\eta}\setminus L_{\gamma}$ does contain an $\alpha$-ITRM-recognizable subset of $\alpha$, and for cofinally in $\gamma$ many $\xi$, we have $(L_{\xi+1}\setminus L_{\xi})\cap\mathfrak{P}(\alpha)\neq\emptyset$. The same is true for $\alpha$-ITRM-recognizability without parameters.
    \item (Cf. \cite{ITRM Recog 2}, Theorem 5.2 for an ITRM-version) For all $\gamma$, either all $\alpha$-ITRM-quick elements of $\mathfrak{P}(\alpha)\cap(L_{\gamma+1}\setminus L_{\gamma})$ are $\alpha$-ITRM-semirecognizable or none is. The same is true for $\alpha$-wITRMs and for $\alpha$-ITRM-semirecognizability without parameters.
    \item If $\alpha$ is ITRM-singular or $\text{cf}^{V}(\alpha)>\omega$, then, for each $\gamma$ such that $\mathfrak{P}(\alpha)\cap(L_{\gamma+1}\setminus L_{\gamma})\neq\emptyset$, $L_{\gamma+1}\setminus L_{\gamma}$ contains an $\alpha$-ITRM-recognizable subset of $\alpha$ if and only if $c_{\gamma}$ is $\alpha$-ITRM-recognizable. The same is true without parameters.
\end{enumerate}
\end{thm}
\begin{proof}
The proofs are adaptations of those given for ITRMs in \cite{ITRM Recog 1} and \cite{ITRM Recog 2}. We only prove the versions in which parameters are allowed; the proofs for the  parameter-free versions are entirely analogous.
\begin{enumerate}

\item Let $P$ be an $\alpha$-ITRM-program that recognizes a constructible set $X\subseteq\alpha$ in the parameter $\zeta<\alpha$. 
The statement ``There is $Y\subseteq\alpha$ such that $P^{Y}(\zeta)\downarrow=1$'' is $\Sigma_{1}$ in the parameters $\zeta$ and $\alpha$. Let us write this statement as $\exists{x,c}\phi(x,c)$, where $\phi(x,c)$ is the $\Delta_{0}$-statement ``$c$ is a halting computation of $P$ in the oracled $x$ with output $1$''. By assumption, this statement is true in $L$. Thus, by definition of $\sigma_{\alpha+1}$, it holds in $L_{\sigma_{\alpha+1}}$. So there are $x,c\in L_{\sigma_{\alpha+1}}$ such that $\phi(x,c)$. Since computations are absolute between transitive $\in$-structures, $c$ is actually such a computation, so we must have $x=Y$, so that $Y\in L_{\sigma_{\alpha+1}}$. By Lemma \ref{sigma sequence}, we have $\sigma_{\alpha}=\sigma_{\alpha+1}$, so $Y\in L_{\sigma_{\alpha}}$.

\item Let $\beta<\sigma_{\alpha}$. We will show that there is an $\alpha$-ITRM-recognizable subset of $\alpha$ that is not contained in $L_{\beta}$. To this end, pick, by definition of $L_{\sigma_{\alpha}}$, an ordinal $\gamma\in(\beta,\sigma_{\alpha})$ such that, for some $\Sigma_{1}$-statement $\phi(\rho)$ with parameter $\rho\in\alpha$,  we have $L_{\gamma}\models\phi(\rho)$ and $\gamma$ is minimal with this property. By fine-structure, $L_{\gamma+1}$ then contains a nice $\alpha$-code for $L_{\gamma}$. 
By Lemma \ref{minimal name identification} and Lemma \ref{decidable model classes}, there is an $\alpha$-recognizable nice $\alpha$-code $c$ for $L_{\gamma}$ in $L_{\gamma+1}\setminus L_{\gamma}$. Since $\gamma>\beta$, we have $c\notin L_{\beta}$. 

\item Pick $\beta>\sigma_{\alpha}$, along with a limit $\lambda$ of $\alpha$-indices greater than $\beta+\delta$ which is at the same time $\Sigma_{2}$-admissible. Then $L_{\lambda}$ is a model of the statement $\psi(\alpha,\delta)$, which is ``There is an ordinal $\beta$ such that $\beta+\delta$ exists and all $\alpha$-ITRM-recognizable subsets of $\alpha$ are contained in $L_{\beta}$'', and also of ``There are cofinally many $\alpha$-indices''. Thus, the statement that there exists an $L$-level with these properties is true in $L$; moreover, it is easily seen to be $\Sigma_{1}$ in the parameters $\alpha$ and $\delta$. Consequently, it will be true in $L_{\sigma_{\alpha+1}}$, and hence in $L_{\sigma_{\alpha}}$. Hence, there is a $\Sigma_{2}$-admissible ordinal $\lambda\in L_{\sigma_{\alpha}}$ which is a limit of $\alpha$-indices and believes $\psi(\alpha,\delta)$. Pick a witness $\beta$ for this statement. Assume for a contradiction that some element $X$ of $(L_{\lambda}\setminus L_{\beta})\cap\mathfrak{P}(\alpha)$ is $\alpha$-ITRM-recognizable, say by the program $P$. It follows from \cite{Koepkes Zoo II}, Theorem $46$ by the $\Sigma_{2}$-admissibility of $\lambda$ that $P^{Y}$ will either halt or run into a strong loop by time $\lambda$ for all $Y\in L_{\lambda}\cap\mathfrak{P}(\alpha)$. If the latter option were true for any such $Y$, then $P^{Y}$ would actually be looping and hence $P$ could not recognize $X$. Thus $P^{Y}$ halts in less than $\lambda$ many steps for all $Y\in L_{\lambda}$. Since $L_{\lambda}$ does not believe $X$ to be recognizable by $P$, we either must have $P^{X}\downarrow=0$ or $P^{Y}\downarrow=1$ for some $Y\in L_{\lambda}\cap\mathfrak{P}(\alpha)$ different from $X$. But both options contradict the assumption that $P$ recognizes $X$ by absoluteness of computations. 

\item (For simplicity, we drop mentioning of parameters in this proof.) Let $x,y\in\mathfrak{P}(\alpha)\cap(L_{\gamma+1}\setminus L_{\gamma})$ be $\alpha$-ITRM-quick and assume that $x$ is $\alpha$-ITRM-semirecognizable. Pick a program $P$ that semi-recognizes $x$. By assumption, 
$y\in L_{\gamma+1}$, and because $x$ is $\alpha$-ITRM-quick, we have $\beta^{x}(\alpha)>\gamma+1$, and by Lemma \ref{itrm comp strength}, we have $y\in L_{\gamma+1}\subseteq L_{\beta^{x}(\alpha)}\subseteq L_{\beta^{x}(\alpha)}[x]$. So $y$ is $\alpha$-ITRM-computable from $x$. Let $Q_{xy}$ be an $\alpha$-ITRM-program that computes $y$ in the oracle $x$. By the same argument, let $Q_{yx}$ be an $\alpha$-ITRM-program that computes $x$ in the oracle $y$. To semi-recognize $y$, proceed as follows: Given $z\subseteq\alpha$, first run $P_{yx}^{z}(\iota)$ for all $\iota<\alpha$. If this does not halt (or if 
$P_{yx}^{z}(\iota)$ halts with an output different from $0$ or $1$ for some $\iota<\alpha$), then $z\neq x$, and so the machine behaves as expected. If it halts, let $z^{\prime}$ be the subset of $\alpha$ computed by this program. Run $P^{z^{\prime}}$. 
If this computation does not halt, then $z^{\prime}\neq x$, which means (by definition of $Q_{xy}$) that $z\neq y$. If it does halt, then we know that 
$z^{\prime}=x$, so we run $Q_{yx}^{z^{\prime}}(\iota)$ for all $\iota<\alpha$ and compare each bit to $z$. If they disagree in some place, we run into and endless loop. Otherwise, by definition of $Q_{yx}$, we know that $z=y$, so we halt.

\item By a simple fine-structural argument, we have $c_{\gamma}\in L_{\gamma+1}$ for every such $\gamma$ (because a new subset of $\alpha$ is definable over  $L_{\gamma}$, namely $x$), while $c_{\gamma}\notin L_{\gamma}$ by Lemma \ref{no code for itself}. Thus, if $c_{\gamma}$ is $\alpha$-ITRM-recognizable, then $L_{\gamma+1}\setminus L_{\gamma}$ contains an $\alpha$-ITRM-recognizable element. 

On the other hand, suppose that $x\in\mathfrak{P}(\alpha)\cap(L_{\gamma+1}\setminus L_{\gamma})$ is $\alpha$-ITRM-recognizable, and let $P$ be an $\alpha$-ITRM-program that recognizes $x$. We want to show that $c_{\gamma}$ is $\alpha$-ITRM-recognizable. So let $y\subseteq\alpha$ be given in the oracle. 

By Lemma \ref{check nice names}, we can check whether $y$ is a nice name for an $L$-level. If not, we halt with output $0$. Otherwise, let $L_{\delta}$ be the $L$-level coded by $y$, with bijection $f:\alpha\rightarrow L_{\delta}$. We run through $\alpha$, and, for each $\xi<\alpha$, we use Lemma \ref{bounded truth} to check whether $f(\xi)\subseteq \alpha$; if so, we check whether $P^{f(\xi)}\downarrow=1$. If no such element is found, we halt with output $0$. If one is found, we check whether $L_{\delta}$ contains an $L$-level that contains $f(\xi)$. If that is the case, we halt with output $0$. Otherwise, we know that $y$ codes $L_{\gamma}$. We have now shown how to decide $\text{nice}_{\alpha}(\gamma)$. 
We know that $c_{\gamma}=\text{Def}(L_{\gamma},\phi,\rho)$ for some $\in$-formula $\phi$ and some $\rho<\gamma$. Moreover, by minimality of $c_{\gamma}$, we can assume without loss of generality that $\rho$ is minimal with the property that $\text{Def}(L_{\gamma},\phi,\rho)$ is a nice code for $L_{\gamma}$. By Lemma \ref{find minimal parameter}, we can compute from $y$ the ordinal $\zeta$ that codes $\rho$. But now, we can use $\zeta$ to compute $c_{\gamma}$ from $y$ and compare it to $y$, thus deciding whether $y=c_{\gamma}$. 

If $x$ is $\alpha$-ITRM-recognizable without parameters, then the argument shows that the same is true for $c_{\gamma}$. 

\end{enumerate} 
\end{proof}

\begin{question}
    Is it true that, for all $\gamma$, either all $\alpha$-ITRM-quick elements of $\mathfrak{P}(\alpha)\cap(L_{\gamma+1}\setminus L_{\gamma})$ are $\alpha$-ITRM-recognizable or none is? (By \cite{ITRM Recog 2}, Theorem 5.2, this is true for ITRMs.)
\end{question}

    



The Jensen-Karp-theorem (\cite{JK}, section $5$) states that $\Sigma_1$-statements are absolute between $V_{\alpha}$ and $L_{\alpha}$ when $\alpha$ is a limit or admissible ordinals. From this, and the fact that $\beta^{x}(\omega)=\omega_{\omega}^{\text{CK},x}$ for all $x\subseteq\omega$ (Koepke, \cite{K1}) one obtains that, when $x$ is $\omega$-ITRM-recognizable, then $x\in L_{\omega_{\omega}^{\text{CK},x}}$ (ibid.).  It is then natural to ask whether we have in general that $x\in L_{\beta^{x}(\alpha)}$ when $x\subseteq\alpha$ is $\alpha$-ITRM-recognizable. For the general case, we can only partial information so far.



\begin{defini}
    An ordinal $\alpha$ is called ITRM-countable if and only if there is an $\alpha$-ITRM-computable bijection $f:\omega\rightarrow\alpha$.
\end{defini}

\begin{lemma}{\label{quickly constructible recognizables}}
    \begin{enumerate}
    \item If $\alpha$ is ITRM-countable and $x\subseteq\alpha$ is $\alpha$-ITRM-recognizable, then $x\in L_{\beta^{x}(\alpha)}$. 
    \item 
    If $\alpha$ is a regular cardinal, 
    then there is an $\alpha$-ITRM-recognizable set $x\subseteq\alpha$ such that $x\notin L_{\beta^{x}(\alpha)}$.     
    \end{enumerate}
\end{lemma}
\begin{proof}
\begin{enumerate}
    \item As in Proposition 46 of \cite{Koepkes Zoo II}, $\beta^{x}(\alpha)$ is a limit of admissible ordinals.\footnote{We sketch the argument for the sake of the reader: Using the computable bijection between $\omega$ and $\alpha$, every subset of $\alpha$ can be encoded as a subset of $\omega$. Already on $\omega$-ITRMs, one can compute hyperjumps of real numbers by \cite{K1}, Theorem $5$. Thus, for any $y\in L_{\beta^{x}(\alpha)}[x]$, we can compute the hyperjump of $y$, and thus a code for $\omega_{1}^{\text{CK},y}$.} Let $P$ be a program that recognizes $x$. Then $L_{\omega^{x}(\beta)}[x]\models\exists{y}P^{y}\downarrow=1$, which is $\Sigma_{1}$. Thus, by the Jensen-Karp theorem, the same statement holds in $L_{\beta^{x}(\alpha)}$, so $x\in L_{\beta^{x}(\omega)}$. 
    \item By \cite{Koepkes Zoo II}, Corollary $16$, we have $\beta^{x}(\alpha)=\alpha^{\omega}$ for all $x\subseteq\omega$ in this case. We will show that $L_{\alpha^{\omega}}$ has an $\alpha$-ITRM-recognizable very nice $\alpha$-code $c$. By Lemma \ref{no code for itself}, we then have $c\notin L_{\alpha^{\omega}}=L_{\beta^{c}(\alpha)}$. 
    By Lemma \ref{check nice names}, we can check whether a given $x\subseteq\alpha$ is a very nice $\alpha$-code for an $L$-level $L_{\delta}$ with $\delta>\alpha$. One can now use Lemma \ref{bounded truth} to check whether $L_{\delta}$ believes that $\alpha^{n}$ exists for all $n\in\omega$, but $\alpha^{\omega}$ does not exist. We can thus decide whether a given $x\subseteq\alpha$ is a very nice code for $L_{\alpha^{\omega}}$.  
    It is standard that a bijection from $\alpha$ to $\alpha^{\omega}$, and hence from $\alpha$ to $L_{\alpha^{\omega}}$, is definable over $L_{\alpha^{\omega}}$, so, by Lemma \ref{if code then nice}, such a code is contained in $L_{\alpha^{\omega}+1}\setminus L_{\alpha^{\omega}}$. It now follows from Lemma \ref{minimal name identification} that there is an $\alpha$-ITRM-recognizable very nice code for $L_{\alpha^{\omega}}$ in $L_{\alpha^{\omega}+1}\setminus L_{\alpha^{\omega}}$ (which is in fact $\alpha$-ITRM-recognizable without parameters).

\end{enumerate}
    
\end{proof}

\begin{remark}
Note that the program that recognizes $c$ in part (2) uses a fixed natural number $n$ of registers, and will thus halt for any input in time $<\alpha^{n+1}$ by \cite{Koepkes Zoo II}, Theorem $14$. This observation thus reveals a considerable divergence between two natural notions of complexity for constructible subsets $x$ of $\alpha$, namely the ``time complexity'' of an $\alpha$-ITRM deciding $\{x\}$ on the one hand and the constructible rank (i.e., the minimal index $\gamma$ of an $L$-level such that $x\in L_{\gamma+1}\setminus L_{\gamma}$) of $x$ on the other. 
\end{remark}

\begin{question}
    Are there ordinal $\alpha$ such that $L_{\alpha}\not\models\text{ZF}^{-}$ such that, for some $\alpha$-ITRM-recognizable $x\subseteq\alpha$, we have $x\notin L_{\beta^{x}(\alpha)}$?    
\end{question}

This leaves the case of regular cardinals in $L$ somewhat under-explored. We currently do not know the minimal ordinal $\tau$ such that $L_{\tau}$ contains all $\alpha$-ITRM-recognizable subsets of $\alpha$ in this case. However, there is a concise characterization in this case. Recall from Hamkins and Leahy \cite{Hamkins-Leahy} that, for an $\in$-structure $M$, a set $X\subseteq M$ is called \textit{implicitly definable} in $M$ if and only if, for some $\in$-formula $\phi$, $X$ is the unique subset of $M$ such that $(M,X)\models\phi(X)$.

\begin{thm}{\label{imp and recog}}
Let $\alpha$ be a regular cardinal. Then $X\subseteq\alpha$ is $\alpha$-ITRM-recognizable if and only if $X$ is implicitly definable in $L_{\alpha}$. 
\end{thm}
\begin{proof}
First suppose that $X$ is $\alpha$-ITRM-recognizable, and let $P$ be an $\alpha$-ITRM-program that recognizes $x$. 
Suppose that $P$ uses $n$ registers. By an easy relativization of the argument in \cite{Koepkes Zoo II}, Theorem $14$, $P^{x}$ halts in $<\alpha^{n}$ many steps.  
Using the ``Pull-Back'' technique from \cite{Koepkes Zoo II}, Lemma $7$, we can express the statement ``$P^{Y}$ halts in $<\alpha^{n}$ many steps'' as an $\in$-formula $\phi(Y)$ using $Y$ as an extra predicate over $(L_{\alpha},Y)$. By assumption, $X$ is unique such that $(L_{\alpha},X)\models\phi(X)$. 

Conversely, suppose that $X$ is implicitly definable in $L_{\alpha}$ by the $\in$-formula $\phi(Y)$. Pick $n$ such that $\phi$ is $\Sigma_{n}$. Then, given some $Y\subseteq\alpha$ in the oracle, a slight variant of $P_{\alpha-\text{ntruth}}$ can be used to evaluate the truth of $\phi(Y)$ in $(L_{\alpha},Y)$ relative to the oracle $Y$, where statements of the form $Y(\iota)$ are evaluted by calling the oracle with $\iota$. 
\end{proof}

We also note the following. We say that a formula is $\Sigma_{1}^{1,\alpha}$ if and only if it is of the form $\exists{X\subseteq\alpha}\phi(X)$, and that it is $\Pi_{1}^{1,\alpha}$ if and only if it is of the form $\forall{X\subseteq\alpha}\phi(X)$, where $\phi$ is a first-order $\in$-formula. If a set $Y\subseteq\alpha$ is definable both by a $\Sigma_{1}^{1,\alpha}$ and by a $\Pi_{1}^{1,\alpha}$-formula, where these formulas are evaluted in a $\in$-structure $M$, then $Y$ is called $\Delta_{1}^{1,\alpha}(M)$. 

\begin{prop}{\label{recog delta11}}
If $\alpha$ is an uncountable regular cardinal in $L$, then every $\alpha$-ITRM-recognizable $x\subseteq\alpha$ is $\Delta_{1}^{1,\alpha}(L_{\alpha})$
\end{prop}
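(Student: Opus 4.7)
The plan is to write $x$ as both a $\Sigma^{1}_{1}$- and a $\Pi^{1}_{1}$-subset of $\alpha$ over $L_{\alpha}$, using the recognizing program itself as the engine. Let $P$ witness recognizability of $x$ with parameter $\xi<\alpha$. Since $y=x$ is the unique oracle on which $P^{y}(\xi)$ outputs $1$, for every $\iota<\alpha$ one has the equivalences
\begin{align*}
\iota \in x &\iff \exists y \subseteq \alpha\,(\iota \in y \wedge P^{y}(\xi)\!\downarrow = 1), \\
\iota \in x &\iff \neg\,\exists y \subseteq \alpha\,(\iota \notin y \wedge P^{y}(\xi)\!\downarrow = 1).
\end{align*}
Hence, provided the halting predicate ``$P^{y}(\xi)\!\downarrow = 1$'' can itself be put into the shape ``$\exists c \subseteq \alpha\,\phi(c,y)$'' for some $\in$-formula $\phi$ over $L_{\alpha}$ (with $P$ and $\xi$ as parameters), the first equivalence is a $\Sigma^{1}_{1}(L_{\alpha})$-definition of $x$ and the second, rewritten as $\forall y\,\forall c\,\neg(\iota \notin y \wedge \phi(c,y))$, is a $\Pi^{1}_{1}(L_{\alpha})$-definition, giving the desired $\Delta^{1}_{1}$-classification.

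To supply such a $\phi$, I would take $c \subseteq \alpha$ to encode, via a coding well-ordering of $\alpha$ also contained in $c$, the entire sequence of snapshots of the computation (program line together with finitely many register contents $<\alpha$). The formula $\phi(c,y)$ then asserts, in a first-order way over $L_{\alpha}$ with $c,y$ as second-order parameters, that $c$ codes such a sequence whose initial snapshot places $\xi$ in the input register, whose successor transitions follow $P$ (querying $y$ at oracle calls), whose limit snapshots obey the liminf rule, and whose final snapshot is a halting state with output $1$. Each of these clauses is a routine first-order assertion once the coding convention is fixed.

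The main technical obstacle is verifying that every halting $\alpha$-ITRM-computation $P^{y}(\xi)$, for an arbitrary oracle $y\subseteq\alpha$, can indeed be encoded by a subset of $\alpha$. For this I would invoke that $\alpha$ is uncountable and regular in $L$, so that $L_{\alpha}\models\mathrm{ZF}^{-}$ by Theorem~\ref{itrm comp strength}, and combine this with the bound from the same theorem: the lengths of halting $\alpha$-ITRM-computations are strictly below the next $\Sigma_{2}$-admissible ordinal above $\alpha$, which in turn lies strictly below the $L$-successor $\alpha^{+}$. Thus every halting computation has $L$-cardinality at most $\alpha$, and admits a coding by a subset of $\alpha$ together with a coding well-ordering of $\alpha$. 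Once this codability is secured, transcribing the successor, limit and halting conditions into first-order form over $L_{\alpha}$ is routine, and the proposition follows.
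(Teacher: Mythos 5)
Your outer skeleton is exactly the paper's: exploit the uniqueness of the recognized oracle to write $\iota\in x$ both as ``some accepted oracle contains $\iota$'' ($\Sigma^1_1(L_\alpha)$) and as ``every accepted oracle contains $\iota$'' ($\Pi^1_1(L_\alpha)$). Where you diverge is in how the acceptance predicate $P^{y}(\xi)\downarrow=1$ is rendered over $L_\alpha$. The paper does \emph{not} quantify over computation codes: it relativizes the halting-time bound of \cite{Koepkes Zoo II}, Lemma 14, to get that $P^{y}$ halts in fewer than $\alpha^{n}$ steps for \emph{every} oracle $y$ (where $n$ is the number of registers of $P$), and then uses the Pull-Back technique to turn ``$P^{y}$ halts in $<\alpha^{n}$ steps with output $1$'' into a genuinely first-order formula over $L_{\alpha}$ with $y$ as a second-order parameter. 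Your route, via an extra existential second-order variable $c$ coding the snapshot sequence, can be made to work, but as written it has two real soft spots.

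First, your bound on halting times is mis-sourced. Theorem \ref{itrm comp strength} bounds $\beta(\alpha)$ and the clockable ordinals for computations \emph{without} an arbitrary oracle; it says nothing about how long $P^{y}(\xi)$ may run when $y$ is an arbitrary subset of $\alpha$, which is what both of your displayed equivalences quantify over. The conclusion you need (every halting computation has $L$-cardinality at most $\alpha$, hence is codable by a subset of $\alpha$) is true, but the correct justification is the oracle-uniform, pigeonhole-style bound $<\alpha^{n}$ from the relativized Lemma 14 -- precisely the ingredient the paper invokes. Second, and more seriously, the clause ``$c$ codes such a sequence'' is not a ``routine first-order assertion'': it must include that the coding order is a \emph{well}-order, since an ill-founded pseudo-computation can satisfy all local successor-, liminf- and halting-clauses while $P^{y}(\xi)$ does not in fact accept; such a false positive with $y\neq x$ destroys the correctness of your $\Pi^1_1$ definition. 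Well-foundedness is in general $\Pi^1_1$, which would make $\phi$ itself $\Pi^1_1$ and your first equivalence $\Sigma^1_2$ rather than $\Sigma^1_1$. The repair is available exactly because $\alpha$ is an uncountable regular cardinal in $L$: every constructible bounded subset of $\alpha$ lies in $L_\alpha$, so any ill-founded order on $\alpha$ has a descending $\omega$-sequence that is an \emph{element} of $L_\alpha$, making well-foundedness first-order over $(L_\alpha,\in,c)$. You should either make this observation explicit or, more economically, adopt the paper's Pull-Back formulation and dispense with the quantifier over $c$ altogether.
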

\begin{proof} 

As in the proof Theorem \ref{imp and recog}, 
there is an $\in$-formula $\phi_{P}$ such that, for all $y\subseteq\alpha$, $L_{\alpha}[x]\models\phi(y)$ if and only if $P^{y}$ halts in $<\alpha^{n}$ many steps. 

Thus, we can describe $x$ as ``$\iota\in X$ if and only if $\exists{Y}(P(Y)\downarrow=1\wedge\iota\in Y)$ -- which is $\Sigma^{1,\alpha}_{1}(L_{\alpha})$ -- and as $\forall{Y}(P(Y)\downarrow=1\rightarrow \iota\in Y)$ -- which is $\Pi^{1,\alpha}_{1}(L_{\alpha})$, so $x$ is $\Delta^{1,\alpha}_{1}(L_{\alpha})$
\end{proof}

\begin{remark}
The preceding proposition fails in general. For example, $L_{\omega_{1}^{\text{CK}}}$ contains all $\Delta_{1}^{1}$-subsets of $\omega$ (see, e.g., \cite{Barwise}, Corollary 3.2), but there are $\omega$-ITRM-recognizable subsets of $\omega$ in $L_{\alpha+1}\setminus L_{\alpha}$ for cofinally in $\sigma$ many ordinals $\alpha$ occur in up to $\sigma$, which is much bigger than $\omega_{1}^{\text{CK}}$.  
\end{remark}



We have so far little information about where in the constructible hierarchy the first non-recognizable subsets of $\alpha$ appear. This is known to happen at the first possible level $L_{\beta(\alpha)}$ when $\alpha=\omega$, in which case a locally Cohen-generic real number over $L_{\beta(\omega)}$ provides an example, see \cite{ITRM Recog 2}, Theorem 3.8. This can be further extended to ITRM-countable values of $\alpha$. In general, however, such generics will not be constructible, and even if they are (namely, if $\alpha$ is countable in $L$), the argument for their non-recognizability no longer works in general. Hence, we ask:

\begin{question}
    Is it true for every exponentially closed ordinal $\alpha$ that $L_{\beta(\alpha)+1}$ contains a subset of $\alpha$ that is not $\alpha$-ITRM-recognizable? In the case where $L_{\alpha}\models\text{ZF}^{-}$, the same question can be asked about $L_{\alpha+2}$.
\end{question}

\section{$\alpha$-wITRMs}{\label{itrms}}

We now consider recognizability for unresetting $\alpha$-register machines; again, we are only interested in the recognizability of constructible subsets of $\alpha$. We recall from \cite{Koepkes Zoo II} that, for any $\alpha$, $\beta^{w}(\alpha)$ denotes the supremum of $\alpha$-wITRM-halting times.

A convenient feature of $\alpha$-ITRMs is their ability to ``search through $\alpha$'', i.e., count upwards in some register from $0$ on until it overflows, thus making it possible to check each element of $\alpha$ for a certain property. For unresetting machines, this obvious strategy is not available: Counting upwards in some register would lead to an overflow of that register at time $\alpha$, which results in the computation being undefined. In some cases, however, such a search is still possible. This motivates the next definition. 

\begin{defini}
An ordinal $\alpha$ is wITRM-searchable if and only if there is a halting $\alpha$-wITRM-program $P$ such that the first register used by $P$ contains each element of $\alpha$ at least once before $P$ stops. If such a program exists that works in the oracle $x\subseteq\alpha$, we call $\alpha$ wITRM-searchable in $x$. If such a program exists that works in the parameter $0$, we say that $\alpha$ is wITRM-searchable without parameters.
\end{defini}

Based on the results in \cite{Koepkes Zoo II}, we can give a full characterization of the wITRM-searchable ordinals. To this end, we recall from \cite{Koepkes Zoo II}, Definition 60 that an ordinal is called \textit{$u$-weak} if and only if any halting $\alpha$-wITRM in the empty input halts in less than $\alpha$ many steps. It is shown in \cite{Koepkes Zoo II} that all $\Pi_3$-reflecting ordinals (and hence, in particular, all $\Sigma_2$-admissible ordinals) are $u$-weak. Moreover, the following was proved in \cite{Koepkes Zoo II}:

\begin{thm} 
[\cite{Koepkes Zoo II}, Theorem 60 and 63] An ordinal is $u$-weak if and only if it is admissible and not wITRM-singular. 
\end{thm}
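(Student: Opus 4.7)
The plan is to prove both directions of the biconditional separately, relying on the trichotomy provided by the strong loop theorem above.

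For the direction $u$-weak $\Rightarrow$ (admissible and not wITRM-singular), I argue by contrapositive. If $\alpha$ is wITRM-singular, witnessed by a program $P$ and parameter $\xi<\alpha$ computing a cofinal $f:\beta\to\alpha$ with $\beta<\alpha$, I build a halting empty-input program $P'$ of runtime at least $\alpha$: first generate $\xi$ by incrementing a register (costing $\xi<\alpha$ steps), then iterate $\iota$ through $\beta$, using $P$ to compute $f(\iota)$ and then running a delay loop counting a scratch register from $0$ up to $f(\iota)$, with an explicit reset of the scratch register at the start of each round. Exponential closure of $\alpha$ keeps the iteration register below $\alpha$, and the reset scheme forces the scratch register's liminf at each limit stage of $\iota$ to be $0$, so no overflow occurs. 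The total runtime is at least $\sup_{\iota<\beta}f(\iota)=\alpha$, contradicting $u$-weakness. If instead $\alpha$ is not admissible, the $\Sigma_1(L_\alpha)$-cofinal function witnessing non-admissibility is wITRM-computable via iterated bounded-truth evaluation over $L$-levels (the wITRM analogue of Lemma \ref{bounded truth}), so $\alpha$ becomes wITRM-singular and we reduce to the previous case.

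For the direction (admissible and not wITRM-singular) $\Rightarrow$ $u$-weak, I take a halting empty-input $\alpha$-wITRM-program $P$ with halting time $\gamma$ and assume for contradiction that $\gamma\geq\alpha$. Since $P$ halts, no register overflows, so the configuration function $c$ takes values in $\omega\times\alpha^{<\omega}$ and is $\Delta_1(L_\alpha)$ on its domain. I split according to whether the register contents on $[0,\alpha)$ are bounded strictly below $\alpha$. In Case A, bounded by some $\beta^*<\alpha$, the set of configurations has cardinality at most $\omega\cdot(\beta^*)^n<\alpha$ by exponential closure, and a liminf argument -- picking $\iota^*<\alpha$ past which each coordinate remains at or above its eventual liminf value, and then locating two times in $[\iota^*,\alpha)$ at which the entire configuration simultaneously equals $c(\alpha)$ -- produces a strong loop, contradicting that $P$ halts. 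In Case B, some register is unbounded in $\alpha$ on $[0,\alpha)$; I define $Q(y)$ to be the first step at which some register content exceeds $y$, giving a wITRM-computable, admissibility-bounded map cofinal in $\alpha$. Restricting $Q$ to any bounded cofinal subset of $\alpha$ in $L$ (which exists exactly when $\alpha$ is not $L$-regular) witnesses wITRM-singularity, contradicting the hypothesis; when $\alpha$ is $L$-regular, admissibility applied to $\Sigma_1$-Collection forces each $c\restriction\beta$ to have bounded image with bound $\mu(\beta)<\alpha$, and $L$-regularity together with the $\Sigma_1$-definability of $\mu$ rules out the unboundedness hypothesis of Case B altogether.

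The main obstacle is Case A, where the liminf rule operates coordinatewise: pointwise-minimal configurations recurring cofinally need not be attained simultaneously across all registers at a common time, so extracting two times in $[\iota^*,\alpha)$ at which the full configuration equals $c(\alpha)$ requires exploiting both the small size of the configuration space and the fact that each coordinate attains its liminf cofinally often, to force a synchronized recurrence that yields the desired strong loop.
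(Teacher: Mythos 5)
This statement is quoted in the paper from \cite{Koepkes Zoo II} (Theorems 54 and 56) without proof, so there is no in-paper argument to compare against; your proposal has to stand on its own, and as written it does not. The decisive gap is the one you yourself flag in Case~A: you propose to find two times in $[\iota^*,\alpha)$ at which the configuration equals the coordinatewise liminf $c(\alpha)$, but the coordinatewise liminf configuration need never be attained at any single time. (Two registers can each drop to their liminf value cofinally often but never simultaneously -- e.g.\ one at ``even'' stages, the other at ``odd'' ones -- so ``small configuration space plus cofinal attainment per coordinate'' does not force a synchronized recurrence.) The actual proof of the strong-loop theorem avoids this by a pressing-down argument: one defines, for each limit $\lambda$, the least $g(\lambda)<\lambda$ past which all states are componentwise $\geq$ the state at $\lambda$, and stabilizes $g$ and the state on a suitable unbounded class. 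Running this \emph{below} $\alpha$ is exactly where admissibility (or cofinality) must enter, and your sketch never uses admissibility in Case~A at all; naming the obstacle is not the same as overcoming it.

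Case~B and the converse direction also have unpatched holes. Restricting your map $Q$ to ``any bounded cofinal subset of $\alpha$ in $L$'' does not witness wITRM-singularity: the definition requires a cofinal $f:\beta\rightarrow\alpha$ computable from a single ordinal parameter $\xi<\alpha$, and an arbitrary constructible cofinal subset of $\alpha$ need not be wITRM-computable from such a parameter. In the $L$-regular subcase, the $\Sigma_1$-definability of your bounding function $\mu$ over $L_{\alpha}$ does not by itself contradict unboundedness (a cofinal $\Sigma_1(L_\alpha)$ map $\alpha\rightarrow\alpha$ is no obstruction to regularity). Finally, in the non-admissible case you invoke a ``wITRM analogue of Lemma~\ref{bounded truth}'', but that lemma is stated for resetting $\alpha$-ITRMs, which can search through $\alpha$; unresetting machines cannot in general (that is precisely the content of Lemma~\ref{search char}), so the claim that every $\Sigma_1(L_\alpha)$-cofinal function is wITRM-computable is itself essentially the theorem being cited and cannot be assumed.
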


\begin{lemma}{\label{search char}}
\begin{enumerate}
\item An ordinal $\alpha$ is wITRM-searchable if and only if it is not $u$-weak, i.e., if and only if $\alpha$ is admissible and wITRM-singular. 
\item If $\alpha$ is wITRM-searchable without parameters, then there is an $\alpha$-wITRM-program that halts after at least $\alpha$ many steps.
\item If a singularization for $\alpha$ is $\alpha$-wITRM-computable without parameters, then $\alpha$ is wITRM-searchable without parameters.
\end{enumerate}
\end{lemma}
\begin{proof}
\begin{enumerate}
\item Suppose first that $\alpha$ is wITRM-searchable, and let $P$ be an $\alpha$-wITRM-program that halts after writing each element of $\alpha$ to the first register $R_{1}$ at least once. Consider the slightly modified program $P^{\prime}$ that runs $P$ but, whenever the content of $R_{1}$ changes, uses a separate register to count from $0$ upwards to to the content of $R_{1}$. Clearly, $P^{\prime}$ will run for at least $\alpha$ many steps before halting, so that $\alpha$ is not $u$-weak. 

On the other hand, suppose that $\alpha$ is not $u$-weak. Thus, $\alpha$ is not admissible or wITRM-singular. In the latter case, it is immediate from the definition of wITRM-singularity that there is an $\alpha$-wITRM-computable cofinal function $f:\beta\rightarrow\alpha$ for some $\beta<\alpha$; in the former case, this is shown in [\cite{Koepkes Zoo II}, Theorem 63]. For simplicity, let us assume without loss of generality that $f$ is increasing. Consider the following algorithm, which works in the parameter $\beta$: Use some register $R_{2}$ to run through $\beta$. For each $\iota<\beta$, compute $f(\iota)$ and $f(\iota+1)$ and store them in $R_{3}$ and $R_{4}$. Copy the content of $R_{3}$ to $R_{1}$ and use $R_{1}$ to count upwards until one reaches the content of $R_{4}$. After that, reset the contents of $R_{1}$, $R_{3}$ and $R_{4}$ to $0$ and increase the content of $R_{2}$ by $1$. If $R_{2}$ contains $\beta$, halt. It is easy to see that, in this way, $R_{1}$ will contain every ordinal less than $\alpha$ at least once before halting.
\item The proof is the same as that for the first direction of (1).
\item This works as in the reverse direction of (1).
\end{enumerate}






\end{proof}

It was shown in \cite{LoMe}, Corollary $9$ (see also \cite{CarlBook}, Corollary 4.2.20) to follow from Kreisel's basis theorem that there are no lost melodies for $\omega$-wITRMs.

If $\alpha$ is an uncountable regular cardinal in $L$, 
then the lost melody theorem fails for $\alpha$-wITRMs for rather drastic reasons:

\begin{lemma}{\label{reg card witrm}}
If $\alpha$ is 
a regular cardinal in $L$,
then there are no constructible $\alpha$-wITRM-recognizable subsets of $\alpha$. 
\end{lemma}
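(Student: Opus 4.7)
My plan is to argue by contradiction. Suppose $x \subseteq \alpha$ is constructible and $\alpha$-wITRM-recognized by a program $P$ in parameter $\xi < \alpha$. The goal is to produce some $y \neq x$ for which the computation $P^{y}(\xi)$ is bit-for-bit identical to $P^{x}(\xi)$, thereby forcing $P$ to accept both $x$ and $y$ and contradicting recognizability of $x$.

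The crux of the argument is to bound the running time of $P^{x}(\xi)$ strictly below $\alpha$. Since $\alpha$ is a regular cardinal in $L$, we have $L_{\alpha} \models \mathrm{ZF}^{-}$, so in particular $\alpha$ is $\Sigma_{2}$-admissible and hence $u$-weak by Theorem 54 of \cite{Koepkes Zoo II}, quoted above. I would then relativize this $u$-weakness to the oracle $x$. Since $x \in L$ we have $L[x] = L$, so any $\alpha$-wITRM-computable cofinal map $f: \beta \to \alpha$ with $\beta < \alpha$ obtained from oracle $x$ would lie in $L$, contradicting the regularity of $\alpha$ in $L$; thus $\alpha$ is not wITRM-singular in $x$. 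Combined with admissibility of $L_{\alpha}[x]$ (which again follows from the regularity of $\alpha$ in $L = L[x]$), a relativized version of the $u$-weakness characterization yields that $P^{x}(\xi)$ halts in some $\gamma < \alpha$ many steps.

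With this bound in hand, the remainder is immediate. In $\gamma < \alpha$ many steps the oracle can be queried on at most $|\gamma| < \alpha$ distinct ordinals; let $Q \subseteq \alpha$ collect these queries. Choose any $\iota \in \alpha \setminus Q$ and set $y := x \,\triangle\, \{\iota\}$. Then $y \neq x$, but $y$ and $x$ agree on every element of $Q$, so the computation of $P^{y}(\xi)$ proceeds through precisely the same sequence of states as $P^{x}(\xi)$ and halts with output $1$. This contradicts the assumption that $P$ recognizes only $x$.

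The main obstacle I anticipate is the relativization of $u$-weakness to constructible oracles in the bounding step. A constructible $x \subseteq \alpha$ need not lie in $L_{\alpha}$ --- for instance, no cofinal subset of $\omega_{1}^{L}$ is an element of $L_{\omega_{1}^{L}}$ --- so $L_{\alpha}[x]$ is in general strictly richer than $L_{\alpha}$, and its admissibility requires verification rather than being automatic. Non-wITRM-singularity in $x$ transfers cleanly via the regularity of $\alpha$ in $L = L[x]$, but preserving admissibility (or otherwise obtaining the time bound) may call for either a careful choice of oracle representation of $x$, or a direct fine-structural argument that $L_{\alpha}[x]$ satisfies $\Delta_{0}$-collection using that $x$ is constructible and $\alpha$ is regular in $L$.
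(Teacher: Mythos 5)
Your proposal follows essentially the same route as the paper: first bound the halting time of $P^{x}(\xi)$ strictly below $\alpha$ via a relativized admissibility argument (the paper notes that $\alpha$ is $\Sigma_{2}^{x}$-admissible for every constructible $x\subseteq\alpha$ and cites Lemma 3.4.10(ii) of \cite{CarlBook} for the time bound, which is the same underlying fact as your relativized $u$-weakness step), and then flip a bit of $x$ that the computation never queries. The only cosmetic difference is that the paper finds the unqueried bit by observing that all register contents stay below $\rho+\tau<\alpha$ (so only an initial segment of the oracle is ever consulted), whereas you use a cardinality count on the set of queried ordinals; both suffice.
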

\begin{proof}
If $\alpha$ is a regular cardinal in $L$, then it is in particular $\Sigma_{2}^{x}$-admissible for any constructible set $x\subseteq\alpha$. It is shown in [\cite{CarlBook}, Lemma 3.4.10(ii)] that this implies that, for every $x\subseteq\alpha$ and any $\alpha$-wITRM-program $P$, $P^{x}$ will either halt in $<\alpha$ many steps or not at all. 

We can now use a standard compactness argument: 
Suppose for a contradiction that $\alpha$ is regular in $L$ and that $x\subseteq\alpha$ is recognized by the  $\alpha$-wITRM-program $P$ in the parameter $\rho<\alpha$. In particular, this means that $P^{x}$ halts in $\tau<\alpha$ many steps. Since the basic command set for $\alpha$-wITRMs allows a register content to increase at most by $1$ in each step, all register contents generated by $P$ during this computation will be smaller than $\rho+\tau$. Since $\alpha$ is indecomposable, we have $\rho+\tau<\alpha$. In particular, the oracle command can only be applied to the first $\rho+\tau$ many bits of $x$. Consequently, if we flip the $(\rho+\tau+1)$th bit of $x$ to obtain $\tilde{x}$, we shall have $P(\rho)^{\tilde{x}}\downarrow=1$ and $\tilde{x}\neq x$, contradicting the assumption that $P$ recognizes $x$ in the parameter $\rho$. 
\end{proof}




\begin{corollary}{\label{comp not recog}}
If $\alpha$ is $u$-weak, then no $\alpha$-wITRM-computable subset of $\alpha$ is $\alpha$-wITRM-recognizable. 
\end{corollary}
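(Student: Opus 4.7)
The plan is to adapt the compactness argument from the proof of Lemma \ref{reg card witrm}, with the only change being that the upper bound on the halting time of $P^{x}(\rho)$, which was obtained from $\Sigma_{2}^{x}$-admissibility in Lemma \ref{reg card witrm}, is now obtained via an oracle-elimination argument that exploits the assumed computability of $x$ together with the $u$-weakness of $\alpha$.

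Suppose for a contradiction that $x\subseteq\alpha$ is $\alpha$-wITRM-computable by an $\alpha$-wITRM-program $Q$ and $\alpha$-wITRM-recognizable by an $\alpha$-wITRM-program $P$ in some parameter $\rho<\alpha$. I would construct an $\alpha$-wITRM-program $P^{\prime}$ which runs in empty input and simulates $P^{x}(\rho)$ step by step: $P^{\prime}$ first counts upwards to $\rho$ in the register playing the role of $P$'s parameter register, then it runs the instructions of $P$ using additional registers as workspace for the subroutine $Q$, and each oracle query of $P$ on some register content $\iota$ is replaced by an invocation of $Q$ on input $\iota$ whose output is written back to that register. Since $P^{x}(\rho)$ halts, so does $P^{\prime}$, and by $u$-weakness of $\alpha$ the latter halts in some $\tau^{\prime}<\alpha$ many steps. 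As every step of $P^{x}(\rho)$ is simulated by at least one step of $P^{\prime}$, the halting time $\tau$ of $P^{x}(\rho)$ then also satisfies $\tau<\alpha$.

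From here the compactness part of the proof of Lemma \ref{reg card witrm} applies verbatim: every register of $P^{x}(\rho)$ starts at some value $\leq\rho$ and grows by at most $1$ per step, so all register contents throughout the computation are bounded by $\rho+\tau$, which is still less than $\alpha$ by the additive indecomposability of the exponentially closed ordinal $\alpha$. Hence the oracle queries of $P^{x}(\rho)$ only concern positions strictly below $\rho+\tau$, and flipping any bit of $x$ at a position $\geq\rho+\tau$ yields some $\tilde{x}\neq x$ with $P^{\tilde{x}}(\rho)\downarrow=1$, contradicting the assumption that $P$ recognizes $x$ in parameter $\rho$.

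The main obstacle I foresee is in the construction of $P^{\prime}$: one has to ensure that the simulation remains faithful at limit stages of $P^{\prime}$'s own execution, so that the $\liminf$-rule resumes the simulation at the right simulated step with the right register contents. This is a standard but delicate matter in ordinal computability, usually handled by arranging the simulation bookkeeping in $\liminf$-stable registers; once that is done, the rest of the argument is a small variation on Lemma \ref{reg card witrm}.
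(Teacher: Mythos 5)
Your proposal is correct and is essentially the paper's intended argument: the paper derives the corollary simply by noting that the proof of Lemma \ref{reg card witrm} goes through once the halting-time bound $\tau<\alpha$ is supplied, and you supply it exactly as intended, by eliminating the oracle via the program $Q$ computing $x$ and then invoking $u$-weakness, after which the bit-flipping compactness argument applies verbatim. One detail to repair: a program on genuinely empty input cannot ``count upwards to $\rho$'' for an arbitrary parameter $\rho<\alpha$ (a finite program has no way to know when to stop), so you should instead pass $\rho$ and the parameter of $Q$ to $P^{\prime}$ as register parameters; this is harmless, since $u$-weakness as used here (compare its characterization via admissibility and non-wITRM-singularity, both parametrized notions) concerns oracle-free computations with arbitrary ordinal parameters below $\alpha$.
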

\begin{proof}
If $x\subseteq\alpha$ is $\alpha$-wITRM-computable, then any $\alpha$-wITRM-program $P$ running in the oracle $x$ can be simulated by another $\alpha$-wITRM-program in the empty oracle that runs $P$ and uses the computability of $x$ to supply the required answers to oracle calls. If $\alpha$ is $u$-weak, then all halting times of $\alpha$-wITRMs in $\alpha$-wITRM-computable ordinals are thus still strictly below $\alpha$. Now argue as in the second part of the proof of Lemma \ref{reg card witrm}.
\end{proof}

The situation in Corollary \ref{comp not recog} is somewhat surprising: One would expect at least the computable objects to be recognizable simply by computing them and then comparing the result to the oracle. As we have seen, this comparison is non-trivial, and sometimes impossible, on $\alpha$-wITRMs. Using the notion of searchability, we can give a precise characterization of when this happens.

\begin{corollary}{\label{comp and recog}}
Let $\alpha$ be exponentially closed. The following are equivalent:
\begin{enumerate}
\item COMP$^{\alpha}_{\text{wITRM}}\subseteq$RECOG$^{\alpha}_{\text{wITRM}}$.
\item $0\in$RECOG$^{\alpha}_{\text{wITRM}}$ (i.e., $0$ is $\alpha$-wITRM-recognizable).
\item COMP$^{\alpha}_{\text{wITRM}}\cap$RECOG$^{\alpha}_{\text{wITRM}}\neq\emptyset$ (i.e., some $\alpha$-wITRM-computable subset of $\alpha$ is $\alpha$-wITRM-recognizable).
\item $\alpha$ is not $u$-weak.
\item $\alpha$ is wITRM-searchable.
\end{enumerate}
\end{corollary}
\begin{proof}
Since $0$ is clearly $\alpha$-wITRM-computable, it is clear that (1) implies (2). It is also clear that (2) implies (3). That (3) implies (4) is the contraposition of Corollary \ref{comp not recog}(1) . 
The equivalence of (4) and (5) is Lemma \ref{search char}. 

It remains to see that (5) implies (1). Suppose that $\alpha$ is wITRM-searchable, and let $x\subseteq\alpha$ be $\alpha$-ITRM-computable, say by the program $P$. Moreover, by searchability of $\alpha$, let $S$ be an $\alpha$-wITRM-program that halts after having written all elements of $\alpha$ to its first register at least once. We will show that $x$ is $\alpha$-wITRM-recognizable: Given $y\subseteq\alpha$ in the oracle, run $S$. Whenever $S$ changes the content of the first register, say, to $\iota$, check whether $\iota\in y$ and whether $P(\iota)\downarrow=1$. If the answers do not agree, halt with output $0$. When $S$ reaches the halting state, halt with output $1$. By the definition of $S$, this will halt with output $1$ if and only if $y=x$.
\end{proof}

\begin{corollary}{\label{comp and recog without parameters}}
Let $\alpha$ be exponentially closed. The following are equivalent:
\begin{enumerate}
    \item Every $x\subseteq\alpha$ that is $\alpha$-wITRM-computable without parameters is $\alpha$-wITRM-recognizable without parameters.
    \item $0$ is $\alpha$-wITRM-recognizable without parameters.
    \item Some $x\subseteq\alpha$ that is $\alpha$-wITRM-computable without parameters is $\alpha$-wITRM-recognizable without parameters.
    \item $\alpha$ is wITRM-searchable without parameters.
\end{enumerate}
\end{corollary}
\begin{proof}
    The implications between (1), (2), (3) and the implication (4)$\Rightarrow$(1) work as in the proof of Corollary \ref{comp and recog}. We show that (2)$\Rightarrow$(4). Let $P$ be an $\alpha$-wITRM-program that recognizes $0$. Then $P$ halts in the empty oracle. If there was some $\iota<\alpha$ such that $P^{0}$ makes no oracle call with $\iota$, then we would have $P^{\{\iota\}}\downarrow=1$, contradicting the assumption that $P$ recognizes $0$. So the sequence of oracle calls performed by $P^{0}$ constitutes a search through $\alpha$.
\end{proof}

We recall some results from \cite{CarlBook}, which in turn are generalizations of results from Koepke \cite{K} (pp. 261f).

\begin{lemma}{\label{identification lemma}}
Let $\alpha$ be exponentially closed, $\beta<\alpha$ and $c$ a nice $\beta$-code for a (transitive) $\in$-structure $S\supseteq\alpha$ via some bijection $f:\beta\rightarrow S$. 

\begin{enumerate} 
\item (Cf. \cite{CarlBook}, Lemma 2.3.29, generalizing \cite{K}, p. 261f.) There is an $\alpha$-wITRM-program $P_{\text{compare}}$ such that, when $c,c^{\prime}\subseteq\beta$ are nice $\beta$-codes for (transitive) $\in$-structures and $\iota,\iota^{\prime}<\beta$ are such that $\iota$ codes an ordinal in $c$ and $\iota^{\prime}$ codes an ordinal in $c^{\prime}$, then $P_{\text{compare}}^{c\oplus c^{\prime}}(\iota,\iota^{\prime})$ decides whether $\iota$ codes the same ordinal in $c$ that $\iota^{\prime}$ codes in $c^{\prime}$. 
\item There is an $\alpha$-wITRM-program $P_{\text{identify}}$ such that, 
for every $\iota<\alpha$, 
$P_{\text{identify}}^{c}(\iota,\beta)$ halts with output $f^{-1}(\iota)$ (i.e., the ordinal that codes $\iota$ in the sense of $c$). 

\item Moreover, there is an $\alpha$-wITRM-program $P_{\text{decode}}$ such that, for every $\iota<\beta$ such that $f(\omega\iota)\in\alpha$, $P_{\text{decode}}^{c}(\omega\iota,\beta)$ halts with output $f(\omega\iota)$, i.e., with output $\iota$.
\end{enumerate}
\end{lemma}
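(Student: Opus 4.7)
The three parts will be addressed in turn. Part (1) is essentially Lemma 2.3.29 of \cite{CarlBook} (generalizing \cite{K}, pp.\ 261f.); the underlying algorithm is an iterative Mostowski-collapse-style comparison in which, at each stage, pairs $(\mu,\mu')\in\beta\times\beta$ that cannot code the same ordinal in $c$ and $c'$ are ruled out because their $\in$-predecessors fail to match up. Exponential closure of $\alpha$ enables the required pairings to be carried out in registers bounded by $\alpha$, and the iteration terminates within $<\alpha$ many steps.

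For Part (3), I give an explicit algorithm. Let $T:=\omega\iota$ be the input. Initialize $\xi:=0$ and $C:=0$, and enter the following loop: if $\xi=T$, halt with output $C$; otherwise, if $p(\xi,T)\in c$, increment $C$; finally, in either case, increment $\xi$. Since $c$ is nice and $T$ codes the ordinal $\iota$, the $\in$-predecessors of $T$ in $c$ are exactly the positions $\omega\gamma$ with $\gamma<\iota$. One checks by induction on $\gamma\le\iota$, invoking the liminf rule at limit stages, that just before $\xi$ reaches $\omega\gamma$ the counter $C$ reads precisely $\gamma$: at successors this is clear from the body of the loop, and at limits it follows because $C$ has taken every smaller value cofinally. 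Halting at $\xi=T$ thus outputs $\iota$. The running time is a constant multiple of $\omega\iota<\alpha$, well within the exponential-closure bound.

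For Part (2), I split on whether $\iota<\beta$. If $\iota<\beta$, the task is to compute $\omega\iota$ on an $\alpha$-wITRM. The standard tool is a ``lag register'': we maintain $R$, $R_{\text{lag}}$, and $C$, where the main loop halts with output $R$ whenever $C=\iota$, and otherwise invokes an inner loop that increments $R$ while checking the invariant $R=R_{\text{lag}}+1$. At a limit time, both $R$ and $R_{\text{lag}}$ take the same liminf value, so the check fails; the program then increments $C$ and returns to the main loop. An inductive argument shows that at the $n$-th limit-time detection one has $R=\omega n$ and $C=n$, and the liminf rule transports this relation through limit stages of $C$ as well. If $\iota\ge\beta$, we search candidate positions $\xi<\beta$ and use $P_{\text{compare}}$ from Part (1) applied to $c\oplus c$, starting from the known positions $\omega\gamma$ for $\gamma<\beta$, to iteratively certify which $\xi<\beta$ codes which ordinal in $[\beta,\alpha)$, halting once we identify $f^{-1}(\iota)$.

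The most delicate feature of the plan is the pervasive liminf analysis in Parts (2) and (3): at every limit time one must trace both the program-line liminf and the register liminfs to verify that the algorithm does not overflow a register and does not fall into an unintended branch. The case $\iota\ge\beta$ of Part (2) is additionally subtle because one has no immediately available reference position coding $\iota$; here exponential closure of $\alpha$ is crucial for keeping the search feasible and all register contents below $\alpha$ throughout.
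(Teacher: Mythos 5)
Parts (1) and (3) of your proposal are sound. For (3) you take a genuinely different and arguably more elementary route than the paper: you compute $\iota$ from $\omega\iota$ directly by counting, in increasing order of position, the $\in$-predecessors of $\omega\iota$ in $c$ (which by niceness are exactly the positions $\omega\gamma$ for $\gamma<\iota$), using the liminf rule to get the order type into a counter register. The paper instead derives $P_{\text{decode}}$ from $P_{\text{identify}}$ by searching through $\alpha$ for the $\xi$ with $f^{-1}(\xi)=\omega\iota$; your version avoids that outer search and is self-contained, at the cost of the (routine but real) liminf bookkeeping you acknowledge. For (1), deferring to \cite{CarlBook} is acceptable; the paper additionally sketches the recursion, implemented via finite sequences of ordinals held in single registers with a fixed large first coordinate to stabilize inferior limits.

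The gap is in Part (2), case $\iota\ge\beta$. First, $P_{\text{compare}}$ applied to $c\oplus c$ can only tell you when two positions of $c$ code the \emph{same} ordinal; it gives no handle on \emph{which} ordinal in $[\beta,\alpha)$ a given position codes, so it cannot by itself certify that some $\xi<\beta$ codes $\iota$. Second, the proposed ``iterative certification'' is a transfinite recursion of length up to $\iota$, and its limit stages do not go through on a register machine: to identify the position coding a limit ordinal $\gamma$ you would need to remember the entire initial segment of already-certified positions (or equivalently, the positions coding the ordinals below $\gamma$), and these positions are not monotone in $\gamma$, so the liminf rule destroys any single-register bookkeeping of ``the current certified position''. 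The paper circumvents both problems at once: it fixes the \emph{computable} auxiliary code $c_{\iota+1}=\{p(\iota_{1},\iota_{2}):\iota_{1}<\iota_{2}<\iota+1\}$ for $\iota+1$ (membership is decided on the fly by ordinal comparison, so no oracle is needed for it) and then searches through the positions $\xi<\beta$, running $P_{\text{compare}}^{c\oplus c_{\iota+1}}(\xi,\iota)$ and outputting the first $\xi$ that succeeds. The transfinite descent is thereby absorbed into the recursion inside $P_{\text{compare}}$, which terminates by well-foundedness of the coded structures; this also handles the case $\iota<\beta$ uniformly, making your lag-register computation of $\omega\iota$ unnecessary (though it is correct). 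To repair your proof, replace the $c\oplus c$ comparison by a comparison against such a computable reference code for $\iota+1$.
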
 
\begin{proof}
In \cite{CarlBook}, Lemma 2.3.29 and Corollary 2.3.31, it is shown that this can be achieved when $\iota<\beta$. However, this condition can be met by simply making $\beta$ larger if necessary and regarding $c$ as a subset of the increased $\beta$.


We sketch the (slighly generalized) algorithms described in \cite{CarlBook} for the convenience of the reader.

Note that $P_{\text{identify}}$ is easily obtained from $P_{\text{compare}}$: We fix a code $c_{\iota+1}:=\{p(\iota_{1},\iota_{2}):\iota_{1}<\iota_{2}<\iota+1\}$ for $\iota+1$; $c_{\iota}$ is clearly computable on an $\alpha$-wITRM. Then, we can run $P^{c\oplus c_{\iota+1}}(\xi,\iota)$ for every $\xi<\alpha$ and output $\xi$ as soon as the answer is positive. (Note that this search is guaranteed to terminate by our assumptions, so that the search does not lead to an overflow.)

Likewise, given the program $P_{\text{identify}}$ of (2), $P_{\text{decode}}$ is easily obtained: Given $\iota<\beta$, first compute $\omega\iota$, then use an extra register to run through $\alpha$ and, for each $\xi<\alpha$, apply $P_{\text{identify}}$ to check whether $\omega\iota$ codes $\xi$. By assumption, such a $\xi$ will eventually be considered, in which case the algorithm stops and outputs $\xi$. 

It thus remains to sketch $P_{\text{compare}}$. So let $c,c^{\prime}\subseteq\beta$ and $\iota,\iota^{\prime}<\beta$ be given.  Note that, by assumption, $c$ and $c^{\prime}$ code well-founded structures, say via bijections $f:\beta\rightarrow S$, $f^{\prime}\rightarrow S^{\prime}$.

We work with two main registers $R$ and $R^{\prime}$, both of which are intended store finite sequences of ordinals below $\text{max}\{\beta,\iota\}$, encoded via iterated Cantor pairing; let us write $p(\iota_{0},...,\iota_{k})$ for this code. In order to ensure compatibility with inferior limits -- i.e., in order to ensure that $\text{liminf}_{\xi<\lambda}p(\iota_{0},...,\iota_{k,\xi})=p(\iota_{0},...,\text{liminf}_{\xi<\lambda}\iota_{k,\xi})$ -- we fix $\mu:=\text{max}(\beta,\iota)+1$ as the first element of these sequences, which will never be changed.\footnote{See \cite{CarlBook}, p. 31-32 or  \cite{Koepkes Zoo II} for a detailed explanation of this trick.} Let $\delta:=f(\iota)$, $\delta^{\prime}:=f(\iota^{\prime})$ be the ordinals coded by $\iota$ in $c$ and by $\iota^{\prime}$ in $c^{\prime}$, respectively. 

We now perform two checks, one whether there is an order-preserving embedding of $\delta$ into $\delta^{\prime}$ and one for the reverse embedding. These checks will recursively call $P_{\text{compare}}$. The well-foundedness of $c$ and $c^{\prime}$ will ensure that the recursion terminates. 

To check whether $\delta$ embeds into $\delta^{\prime}$, we start with the sequences $(\mu,\iota)$ in $R$ and $(\mu,\iota^{\prime})$ in $R^{\prime}$. Now, for every $\xi<\beta$, we check whether $p(\xi,\iota)\in c$, i.e., whether $f(\xi)\in f(\iota)$. If that is the case, we replace the content of $R$ by $(\mu,\iota,\xi)$ and do the following: 
Searching through $\beta$, we test for each $\xi^{\prime}<\beta$ whether $p(\xi^{\prime},\iota^{\prime})\in c^{\prime}$ (i.e., whether $f^{\prime}(\xi^{\prime})\in f^{\prime}(\iota^{\prime})$. If that is the case, we replace the content of $R^{\prime}$ with $(\mu,\iota^{\prime},x^{\prime})$. If such a $\xi$, but no such $\xi^{\prime}$ is found, or vice versa, we output $0$; this means one, but not the other, of $\iota$, $\iota^{\prime}$ codes $0$, so that they do not code the same ordinal. Otherwise, we recursively call $P_{\text{compare}}$ to use $R$ and $R^{\prime}$ to check whether $f(\xi)=f^{\prime}(\xi^{\prime})$ (leaving the first three elements of the sequences stored in this register unchanged).  When this check terminates with output $0$, we know that $\xi$ and $\xi^{\prime}$ do not code the same ordinal, and so we proceed with the next candidate for $\xi^{\prime}$. When no candidate for $\xi^{\prime}$ are left -- i.e., when the search through $\beta$ has been completed without success --, we know that $\delta$ has an element that is not isomorphic to any element of $\delta^{\prime}$, so that we must have $\delta>\delta^{\prime}$ and output $0$. 
Otherwise, we continue with the next candidate for $\xi$. When all $\xi<\beta$ have been checked and the check has not terminated with a negative result, we know that $\delta\leq\delta^{\prime}$. We then proceed in exactly the same way to check whether $\delta^{\prime}\leq\delta$. 

When both of these checks terminate successfully, we halt with output $1$. 



\end{proof}

\begin{lemma}{\label{lome wITRM}}
Let $\alpha$ be an exponentially closed ordinal such that, for some $\beta$, we have $\beta<\alpha<\sigma_{\beta}$. Then there is a lost melody for $\alpha$-wITRMs. 
\end{lemma}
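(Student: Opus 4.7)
The plan is to adapt Case 1 of Theorem \ref{itrm lost melodies} to $\alpha$-wITRMs, replacing the search-based $\alpha$-ITRM tools (Lemma \ref{wo-check}, Lemma \ref{bounded truth}) by the $\alpha$-wITRM tools of Lemma \ref{identification lemma}.

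First, I would use the hypothesis $\alpha<\sigma_{\beta}$: by Lemma \ref{sigma sequence} one has $\sigma_{\alpha}=\sigma_{\beta}$, and by Lemma \ref{sigma alpha} the ordinals $\gamma$ that are minimal with $L_{\gamma}\models\phi(\xi)$ for some $\in$-formula $\phi$ and some $\xi<\beta$ are cofinal in $\sigma_{\beta}$. Since the $\alpha$-wITRM-computable subsets of $\alpha$ are bounded in $L_{\sigma_{\alpha}}$, I fix such a $\gamma\ge\alpha$ for which $L_{\gamma}$ already contains all $\alpha$-wITRM-computable subsets of $\alpha$. A standard fine-structural argument (as in the proof of Lemma \ref{sigma sequence}) shows that $L_{\gamma}$ is the $\Sigma_{1}$-Skolem hull of $\xi+1$, so a canonical code $c\subseteq\alpha$ for $L_{\gamma}$ is $\Sigma_{1}$-definable over $L_{\gamma}$; hence $c\in L_{\gamma+1}$, and Lemma \ref{no code for itself} gives $c\notin L_{\gamma}$. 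Consequently $c$ is not $\alpha$-wITRM-computable.

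For recognizability, on oracle $y\subseteq\alpha$ and parameters encoding $\beta$, $\xi$ and $\phi$, the algorithm would (i) check the niceness structure of $y$ by inspecting positions of the form $\omega\iota$ together with a few auxiliary positions, thereby avoiding a full search of $\alpha$; (ii) apply $P_{\text{compare}}$ from Lemma \ref{identification lemma} to verify that the $\in$-structure coded by $y$ is well-founded; (iii) use $P_{\text{decode}}$ and Boolos's sentence (Lemma \ref{I am an L-level}) to verify that this structure is some $L$-level $L_{\gamma'}$; (iv) check that $L_{\gamma'}\models\phi(\xi)$ and that no strict $L$-initial segment does, which pins down $\gamma'=\gamma$; and (v) verify that $y$ matches the canonical $\Sigma_{1}$-hull encoding of $L_{\gamma}$. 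Any failure yields output $0$; passing all five yields $1$.

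The main obstacle is that $\alpha$-wITRMs cannot in general search through $\alpha$ (Lemma \ref{search char}), which is precisely why the ITRM proof cannot simply be copied. The tools of Lemma \ref{identification lemma} are designed to sidestep this: their internal iterations run only over $\beta<\alpha$ and thus never cause a register overflow, and the truth evaluations in steps (iii)--(v) can be carried out as internal quantifications inside the decoded $L$-level. The most delicate point will be to ensure that the algorithm is always defined (neither overflowing nor looping forever) on arbitrary, possibly ill-founded $y$, as required for $\alpha$-wITRM-\emph{decidability}; I would handle this by interleaving the recursive $P_{\text{compare}}$ calls with the syntactic niceness checks of step (i), so that any deviation from the expected well-founded pattern is detected at a bounded stage and the algorithm safely halts with output $0$.
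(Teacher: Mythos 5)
Your non-computability half is essentially right (pick $\gamma$ minimal witnessing a $\Sigma_1$ fact in a parameter below $\beta$, code $L_{\gamma}$, apply Lemma \ref{no code for itself}), but the recognizability half has a genuine gap, and it is precisely the gap that the hypothesis $\beta<\alpha<\sigma_{\beta}$ exists to repair. You propose to recognize a nice \emph{$\alpha$-code} for $L_{\gamma}$ by adapting Case 1 of Theorem \ref{itrm lost melodies}; but every one of your verification steps -- the niceness check over all $\iota<\alpha$, the well-foundedness test, the truth evaluations for Boolos's sentence and for minimality, and the final bit-by-bit comparison -- requires quantifying over a domain of order type $\alpha$, i.e., searching through $\alpha$. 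For the ordinals covered by this lemma that is in general impossible: $\alpha$ may for instance be $\Pi_3$-reflecting, hence $u$-weak, hence not wITRM-searchable (Lemma \ref{search char}), and a register counting through $\alpha$ simply overflows. Lemma \ref{identification lemma} does not rescue this: it is stated for nice \emph{$\beta$-codes} with $\beta<\alpha$ (which is exactly why its internal loops stay safely below $\alpha$), and moreover $P_{\text{compare}}$ \emph{presupposes} well-foundedness rather than testing it -- the paper has no well-foundedness test for coded relations on $\alpha$ that runs on an $\alpha$-wITRM. Your suggestion to carry out the truth evaluations ``as internal quantifications inside the decoded $L$-level'' is circular: you may only trust the decoded structure after verifying the code, and the verification is the step that needs the search.

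The missing idea is to exploit the collapse. Since $\alpha<\sigma_{\beta}$, fine structure provides a bijection $f:\beta\rightarrow\alpha$ in $L_{\sigma_{\beta}}$; one chooses $\gamma\in(\alpha,\sigma_{\beta})$ with $f\in L_{\gamma}$ and takes the lost melody to be a nice \emph{$\beta$-code} $c\subseteq\beta$ for $L_{\gamma}$, regarded as a subset of $\alpha$. All verification then takes place below $\beta<\alpha$, where an $\alpha$-wITRM can even simulate a resetting $\beta$-ITRM (resets to values below $\beta$ never cause an overflow), so the full recognition machinery of the $\alpha$-ITRM section applies to $d\cap\beta$. Once the first $\beta$ bits of the oracle are certified, the collapse $f$ coded inside $c$ lets you enumerate $[\beta,\alpha)$ by a loop over $\beta$ and confirm that $d$ has no bits set there. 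Reworking your recognition algorithm along these lines closes the gap.
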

\begin{proof}
By Lemma \ref{sigma sequence}, we have $\sigma_{\alpha}=\sigma_{\beta}$.

The statement $\psi$ ``There is an ordinal $\tau$ such that every $\alpha$-wITRM-program in every parameter $\rho<\alpha$ either halts, loops or overflows by time $\tau$'' is $\Sigma_{1}$ (since computations of length $<\tau$ are contained in $L_{\tau}$) and thus, since such $\tau$ exists, it is $<\sigma_{\alpha}$. 

By fine-structure, $L_{\sigma_{\beta}}$ contains a bijection $f:\beta\rightarrow\alpha$. 
Pick $\gamma\in(\alpha,\sigma_{\beta})$ such that $f\in L_{\gamma}$, and for some $\rho\in\beta$ and some $\in$-formula $\phi$, $\gamma$ is minimal with the property $L_{\gamma}\models\phi(\rho)$. Let $c$ be the $<_{L}$-minimal nice $\beta$-code for $L_{\gamma}$ and let $\xi<\beta$ be the ordinal that codes $f$ in the sense of $c$. 

By the techniques discussed in section \ref{itrms}, $c$ is $\alpha$-ITRM-recognizable in the parameter $\rho$. More precisely, the well-foundedness check specifically works as described in the proof of \cite{CarlBook}, Theorem 2.3.25(ii): One performs a depth-first search for an infinite descending chain through $\beta$, initially putting $\beta^{2}$ on the stack to ensure that the stack encoding works properly at limits. Store the length of the descending sequence currently built in a separate register, and output $1$ if that register contains $\omega$ at some point, and $0$ if the search terminates and that has not happened. The other checks necessary for recognizing $c$ in the way described in the proof of Lemma \ref{minimal name identification}, such as evaluating truth predicates in $\beta$-coded structures, can now be executed on a $\beta$-ITRM, which can be simulated on an $\alpha$-wITRM in the parameter $\alpha$ when $\alpha>\beta$.\footnote{Note that the extra complications in the well-foundedness are only necessary when $\beta$ is a regular cardinal in $L$ that has cofinality $\omega$ in $V$; otherwise, a well-foundedness check can be performed on a $\beta$-ITRM (possible relative to some recognizable oracle), which can be simulated on an $\alpha$-wITRM.} 

We claim that $c$ is $\alpha$-wITRM-recognizable (as a subset of $\alpha$) in the parameters $\beta$, $\rho$ and $\iota$. Thus, let a set $d\subseteq\alpha$ be given in the oracle.

First, let $\overline{d}:=d\cap \beta$, which is computable from $d$ in the parameter $\beta$. As just described, check whether $\overline{d}=c$. 
If the program returns $0$, we halt with output $0$. 

Otherwise, we know that the first $\beta$ many bits of $d$ are correct and it remains to check that there $d$ contains no elements that are greater than or equal to $\beta$. 

Using the parameter $\iota$ and the program $P_{\text{decode}}$ from Lemma \ref{identification lemma}, we can compute the bijection $f:\beta\rightarrow\alpha$ as follows: Given $\xi<\beta$, search through $c$ to find the (unique) element of the form $p(p(\xi,\zeta),\iota)$. Then run $P_{\text{decode}}^{c}(\zeta,\beta)$. 

Using $f$, we can now run through $\beta$ and check, for every $\xi\in\beta$, whether $f(\xi)\geq\beta$ and whether $f(\xi)\in c$. If the answer is positive for some $\xi\in\beta$, we halt with output $0$. Otherwise, we halt with output $1$.

\end{proof}

We note the following amusing consequence which yields examples of a definability concept for which the sets of explicitly and implicitly definable objects are disjoint: 

\begin{corollary}
If $\alpha$ is $\Pi_{3}$-reflecting and $\alpha\in(\beta,\sigma_{\beta})$ for some ordinal $\beta$, then 
the set of $\alpha$-wITRM-computable subsets of $\alpha$ and the set of constructible $\alpha$-wITRM-recognizable subsets of $\alpha$ are (both non-empty, but) disjoint.
\end{corollary}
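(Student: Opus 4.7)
The plan is to assemble the corollary from three earlier results, essentially verifying that the two hypotheses ``$\Pi_{3}$-reflecting'' and ``$\alpha \in (\beta, \sigma_{\beta})$'' each supply exactly what one of the main ingredients needs.

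First I would dispose of non-emptiness on the computable side: the empty subset of $\alpha$ (alternatively, $\alpha$ itself, or any initial segment) is trivially $\alpha$-wITRM-computable by a program that, given any input $\iota < \alpha$, halts with output $0$. So the set of $\alpha$-wITRM-computable subsets of $\alpha$ is non-empty regardless of the hypotheses.

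Second, for non-emptiness on the recognizable side, I would appeal directly to Lemma \ref{lome wITRM}. The hypothesis $\alpha \in (\beta, \sigma_{\beta})$ is exactly the assumption of that lemma, so it produces a constructible subset of $\alpha$ that is a lost melody for $\alpha$-wITRMs; in particular, it is $\alpha$-wITRM-recognizable and constructible, witnessing non-emptiness.

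Third, for disjointness, I would use the $\Pi_{3}$-reflection hypothesis. As recalled in the passage preceding Lemma \ref{search char}, every $\Pi_{3}$-reflecting ordinal is $u$-weak (this was established in \cite{Koepkes Zoo II}). Hence Corollary \ref{comp not recog} applies and tells us that no $\alpha$-wITRM-computable subset of $\alpha$ is $\alpha$-wITRM-recognizable, which is exactly the disjointness of the two sets.

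There is no genuine obstacle here; the whole content of the corollary lies in noticing that the two hypotheses are independent, feeding respectively into Lemma \ref{lome wITRM} (which needs $\alpha < \sigma_{\beta}$ to place a lost melody into $L_{\sigma_{\beta}}$) and into Corollary \ref{comp not recog} (which needs $u$-weakness to rule out recognizability of computable sets). The mildly amusing point worth highlighting in the write-up is that these two conditions are simultaneously satisfiable -- for instance, because $\Pi_{3}$-reflecting ordinals occur cofinally below any $\sigma_{\beta}$ of sufficient size -- so the corollary is not vacuous.
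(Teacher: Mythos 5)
Your proof is correct and follows exactly the paper's route: the paper's own proof is a one-liner citing Lemma \ref{lome wITRM} (for a constructible recognizable set, using $\alpha\in(\beta,\sigma_{\beta})$) and Corollary \ref{comp not recog} (for disjointness, via $\Pi_{3}$-reflecting $\Rightarrow$ $u$-weak). Your write-up just makes the same two appeals explicit, together with the trivial non-emptiness of the computable side.
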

\begin{proof}
Immediate from Lemma \ref{lome wITRM} and Corollary \ref{comp not recog}.
\end{proof}




It thus remains to consider the cases where $\alpha$ is either of the form $\sigma_{\beta}$ or a limit of ordinals of this form (note that the latter case generalizes the case of regular cardinals in $L$). 

\begin{lemma}{\label{witrm sigma case}}
If $\alpha$ is of the form $\sigma_{\beta}$ for some ordinal $\beta$, then there are no $\alpha$-wITRM-recognizable constructible subsets of $\alpha$ (and thus, in particular, no lost melodies for $\alpha$-wITRMs).
\end{lemma}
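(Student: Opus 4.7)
My plan is to adapt the compactness argument of Lemma \ref{reg card witrm}, replacing the use of $\Sigma_2^x$-admissibility by the $\Sigma_1$-elementarity $L_\alpha \prec_{\Sigma_1} L$.

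First I would check that $L_\alpha \prec_{\Sigma_1} L$ in both cases. When $\alpha = \sigma_\beta$ this is immediate from the definition. When $\alpha$ is a limit of ordinals of the form $\sigma_\beta$, it follows from the standard fact that a $\Sigma_1$-statement with parameters in $L_\alpha$ has those parameters already in some $L_{\sigma_{\beta_i}} \subseteq L_\alpha$ with $L_{\sigma_{\beta_i}} \prec_{\Sigma_1} L$, and $\Sigma_1$-statements are upward absolute between transitive models.

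Next, suppose toward contradiction that $x \subseteq \alpha$ is constructible and recognized by an $\alpha$-wITRM-program $P$ at parameter $\rho < \alpha$. Since $x \in L$ and ordinal register computations are absolute between transitive models containing the oracle, the halting computation $P^x(\rho)\!\downarrow=1$ takes place inside $L$. Consider the $\Sigma_1$-statement, with only $\rho$ and $P$ as parameters,
\[
\psi(\rho,P)\;\equiv\;\exists y\,\exists T\,\bigl[\,T \text{ is a halting trace of } P \text{ run as an \emph{ORM} on oracle } y \text{ and input } \rho,\ \text{output } 1\,\bigr].
\]
The point is that this uses unrestricted ORM-semantics (there is no overflow clause), so $\alpha$ does not appear as a parameter --- essential because $\alpha \notin L_\alpha$. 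Since $\psi$ holds in $L$, Step 1 yields a witnessing trace $T \in L_\alpha$.

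Now every ordinal occurring in $T$ lies in $L_\alpha$ and is hence strictly below $\alpha$. Therefore all register contents throughout the computation traced by $T$ stay below $\alpha$, so $T$ is in fact a valid $\alpha$-wITRM halting trace. Let $\mu < \alpha$ bound these register contents; every oracle query recorded in $T$ is then at some position below $\mu$.

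Finally I would construct two distinct subsets $y_0, y_1 \subseteq \alpha$ that both agree with the oracle answers in $T$ at every queried position but differ at some position $\eta \in [\mu,\alpha)$ (which exists since $\alpha$ is a limit ordinal). Each of $y_0, y_1$ reproduces exactly the trace $T$ when used as oracle, so $P^{y_i}(\rho)\!\downarrow = 1$ for $i = 0,1$, contradicting the hypothesis that $P$ recognizes only $x$.

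The delicate step is the phrasing of $\psi$ in Step 2: because $\alpha \notin L_\alpha$, we cannot use $\alpha$ as a parameter when reflecting. The trick is to ask for an \emph{ordinal} register machine halting trace (where no overflow condition is needed) and then recover $\alpha$-wITRM-compliance of the reflected trace \emph{a posteriori} from the simple observation that every ordinal inside $L_\alpha$ is automatically less than $\alpha$.
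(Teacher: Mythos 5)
Your proposal is correct and follows essentially the same route as the paper: both reformulate the accepting run as a halting ORM-computation so that $\alpha$ is not needed as a parameter, reflect this $\Sigma_1$-statement down via stability, observe that the reflected trace is automatically $\alpha$-wITRM-legal because all its ordinals lie below $\alpha$, and then derive a contradiction by producing a second accepted oracle differing only at unqueried positions. The only cosmetic difference is that you establish $L_\alpha\prec_{\Sigma_1}L$ uniformly for the limit case, whereas the paper reflects into a single $L_{\sigma_\beta}$ containing $\rho$; both work.
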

\begin{proof}
Let us first assume that $\alpha=\sigma_{\beta}$ for some $\beta\in\text{On}$. Suppose for a contradiction that $x\subseteq\alpha$ is $\alpha$-wITRM-recognizable by the program $P$ in the parameter $\rho<\alpha$. 
Thus, $L$ believes that there are a set $x\subseteq\alpha$ and a halting $\alpha$-wITRM-computation of $P^{x}(\rho)$ with output $1$. 
In particular, $L$ believes that there are a set $x\subseteq\text{On}$ and a halting ORM-computation of $P^{x}(\rho)$ with output $1$, which is a $\Sigma_1$-formula in the parameter $\rho$. 
By definition of $\sigma_{\beta}$, and the fact that $\sigma_{\beta}=\sigma_{\rho+1}$ (see Lemma \ref{sigma sequence} above), the same holds in $L_{\sigma}$. Since computations are absolute between transitive $\in$-structures, $L_{\sigma_{\beta}}$ contains a set $x$ of ordinals and a halting ORM-computation $P^{x}$ with output $1$. Let $\delta$ be the length of this computation. Then $\delta<\alpha$. Consequently, this computation cannot generate register contents $\geq\alpha$ and is thus actually a $\sigma$-wITRM-computation. During this computation, at most the first $\delta$ many bits of $x$ can be considered. It follows that both $P^{(x\cap\delta)}(\rho)$ and $P^{(x\cap\delta)\cup(\delta+1)}(\rho)$ halt in $\delta$ many steps with output $1$ without generating register contents $\geq\alpha$; thus, we have found two different oracles $y$ for which the $\alpha$-wITRM-computation $P^{y}(\rho)$ halts with output $1$, a contradiction to the assumption that $P$ recognizes $x$ in the oracle $y$. 
If $\alpha$ is a limit of ordinals of the form $\sigma_{\beta}$, pick $\beta\in\text{On}$ large enough such that $\rho\in\sigma_{\beta}$ and repeat the above argument. 
\end{proof}

This settles the question whether lost melodies exist for $\alpha$-wITRMs for all exponentially closed values of $\alpha$. 
We summarize the possible relations between COMP$^{\alpha}_{\text{wITRM}}$ and $L\cap$RECOG$^{\alpha}_{\text{wITRM}}$ that can occur for exponentially closed ordinals $\alpha$:

\begin{itemize}
\item COMP$^{\alpha}_{\text{wITRM}}=L\cap$RECOG$^{\alpha}_{\text{wITRM}}$ holds if and only if $\alpha=\omega$.
\item COMP$^{\alpha}_{\text{wITRM}}\subsetneq L\cap$RECOG$^{\alpha}_{\text{wITRM}}$ 
holds if and only if $\alpha>\omega$ is searchable and, for some $\beta$, 
$\beta<\alpha<\sigma_{\beta}$. 
\item COMP$^{\alpha}_{\text{wITRM}}\cap$RECOG$^{\alpha}_{\text{wITRM}}=\emptyset$ (with COMP$^{\alpha}_{\text{wITRM}}\neq\emptyset$, RECOG$^{\alpha}_{\text{wITRM}}\neq\emptyset$) holds if and only if $\alpha>\omega$ is not searchable and, for some $\beta$, $\beta<\alpha<\sigma_{\beta}$. 
\item RECOG$^{\alpha}_{\text{wITRM}}=\emptyset\subsetneq$COMP$^{\alpha}_{\text{wITRM}}$ holds if and only if $\alpha$ is stable. 
\end{itemize}

\begin{question}
    Under what conditions are there lost melodies for $\alpha$-wITRMs that are $\alpha$-ITRM-recognizable without parameters? Which relations can occur between the parameter-free versions of COMP$^{\alpha}_{\text{wITRM}}$ and RECOG$^{\alpha}_{\text{wITRM}}$?
\end{question}

By basically the same arguments, we obtain:

\begin{corollary}
If $\alpha$ is of the form $\sigma_{\beta}$ for some ordinal $\beta$, then there are no (weakly) $\alpha$-wITRM-semi-recognizable and no $\alpha$-wITRM-co-semi-recognizable constructible subsets of $\alpha$.
\end{corollary}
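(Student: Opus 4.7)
I would follow the $\Sigma_1$-reflection scheme of Lemma \ref{witrm sigma case}, with small adjustments for the three notions involved (weak semi-, strong semi-, and co-semi-recognizability). The underlying geometry of the argument is the same: reflect a $\Sigma_1$-statement from $L$ down to $L_{\sigma_\beta}=L_\alpha$, extract a witness computation of length $<\alpha$, and exhibit a second oracle agreeing on the first $\delta$ bits to contradict the uniqueness of the distinguished oracle $x$.

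For (weak or strong) semi-recognizability, suppose $P$ and $\xi<\alpha$ semi-recognize $\{x\}$. The statement ``$\exists y\exists c(c$ is a halting ORM-computation of $P^y(\xi)$)'' is $\Sigma_1$ in the parameter $\xi$ and holds in $L$ (witnessed by $y=x$), so by $\Sigma_1$-stability of $L_{\sigma_\beta}$ in $L$ (using Lemma \ref{sigma sequence}) it reflects to $L_{\sigma_\beta}=L_\alpha$. Hence some $\tilde x\in L_\alpha$ admits a halting computation of length $\delta<\alpha$ within $L_\alpha$, which by absoluteness is a genuine $\alpha$-wITRM-computation. In either semi-recognition regime halting forces $\tilde x=x$; and since the computation reads only the first $\delta$ bits, modifying $\tilde x$ at any position $\mu\geq\delta$ yields a distinct oracle whose computation also halts, contradicting the uniqueness of the halting oracle.

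For co-semi-recognizability, suppose $P$ and $\xi<\alpha$ satisfy $P^y(\xi)\downarrow$ iff $y\neq x$; then $P^x(\xi)$ runs into a strong loop (strong case), or runs into a strong loop or overflows (weak case). Both non-halting outcomes are $\Sigma_1$: a strong loop is an existentially asserted pair $(\iota,\xi_0)$ together with a partial computation whose configurations coincide at these times and satisfy the liminf condition in between, and an overflow is witnessed by a partial computation ending with some register liminf reaching $\alpha$. Thus ``$\exists y(P^y(\xi)$ either runs into a strong loop or overflows$)$'' is $\Sigma_1$ in $\xi$, holds in $L$ (witnessed by $y=x$), and reflects to $L_\alpha$, yielding $\tilde x\in L_\alpha$ whose computation exhibits one of these behaviors by time $\delta<\alpha$. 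By absoluteness this is genuine non-halting in $L$, so co-semi-recognizability forces $\tilde x=x$ (in particular $x\in L_\alpha$). Flipping a bit of $x$ at position $\mu\geq\delta$ then yields $y\neq x$ whose computation displays the same non-halting behavior, contradicting $P^y(\xi)\downarrow$.

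The case where $\alpha$ is a limit of ordinals of the form $\sigma_\beta$ is handled as in the original lemma: pick $\beta$ large enough that $\xi<\sigma_\beta$ and reflect into $L_{\sigma_\beta}\subseteq L_\alpha$ instead. The principal new point relative to Lemma \ref{witrm sigma case} is the co-semi-recognizability case: one must observe that ``runs into a strong loop'' and ``overflows'' admit $\Sigma_1$-descriptions (rather than only $\Pi_1$), so that reflection applies, and then exploit the \emph{uniqueness} of the non-halting oracle to force the reflected witness to coincide with $x$ itself. After that observation, the bit-flipping contradiction is parallel to the recognizability argument.
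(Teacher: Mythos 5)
Your treatment of the two cases that the corollary actually asserts is correct and is essentially the paper's own argument: for (weak or strong) semi-recognizability one reflects the $\Sigma_1$-statement ``some oracle yields a halting computation of $P(\xi)$'' into $L_{\sigma_\beta}$, and for (strong) co-semi-recognizability one uses that ``$P^{y}(\xi)$ is defined but does not halt'' is equivalent to ``the computation of $P^{y}(\xi)$ runs into a strong loop'', which is again $\Sigma_1$ since the strong loop is witnessed by a set-sized partial computation; in both cases the reflected witness must equal $x$ by uniqueness, and flipping a bit beyond the (boundedly many) oracle positions ever queried gives the contradiction. This is exactly the route the paper takes.

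However, you overreach in the co-semi-recognizability part by also trying to rule out the \emph{weak} version, where $P^{x}(\xi)$ is permitted to overflow, and that part of your argument fails. First, an overflow cannot be exhibited ``by time $\delta<\alpha$'': register contents increase by at most $1$ per step, so starting from contents below $\alpha$ (with $\alpha$ additively indecomposable), a register's liminf can only reach $\alpha$ after at least $\alpha$ many steps; the witnessing partial computation therefore has length $\geq\alpha$ and is not an element of $L_{\alpha}$, so your $\Sigma_1$-reflection does not deliver a short witness. Second, an overflowing computation can query cofinally many bits of the oracle, so the bit-flipping step has no position ``beyond the part read'' to modify. Third, and decisively, the conclusion you claim for the weak case is false: the corollary deliberately omits ``weakly'' before ``co-semi-recognizable'', and the very next proposition in the paper shows that every $\alpha$-wITRM-computable subset of $\alpha$ (e.g.\ $\emptyset$) is weakly $\alpha$-wITRM-co-semi-recognizable for every $\alpha$, precisely by a program that overflows exactly on the oracle $x$. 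You should delete the ``weak case'' branch of your co-semi argument; the rest stands.
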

\begin{proof}
This works by the same argument as Lemma \ref{witrm sigma case}, noting that ``There is $x$ such that $P^{x}(\rho)$ halts'' is a $\Sigma_1$-formula and that ``There is $x$ such that $P^{x}(\rho)$ does not halt (but is defined)'' is equivalent to ``There is $x$ such that there is a strong loop in the computation of $P^{x}(\rho)$'', which is again $\Sigma_1$. 
\end{proof}

For weak $\alpha$-wITRM-co-semi-recognizability, however, things are different:

\begin{prop}
For all $\alpha$, each $\alpha$-wITRM-computable subset $x\subseteq\alpha$ is also $\alpha$-wITRM-co-semi-recognizable.
\end{prop}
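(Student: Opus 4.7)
The plan is a direct search-for-disagreement argument, exploiting the computability of $x$. Fix an $\alpha$-wITRM-program $Q$ with $Q^{\iota}\downarrow=\chi_{x}(\iota)$ for every $\iota<\alpha$. I would construct $P$ so that, on oracle input $y$, it maintains a distinguished register $R_{1}$ (initialized to $0$) as a monotone counter, and cyclically executes the following round: copy $R_{1}$ into auxiliary registers, simulate $Q$ on input $R_{1}$ to obtain $\chi_{x}(R_{1})$ in some register $R_{a}$; independently copy $R_{1}$ into a fresh register $R_{b}$ and apply the oracle command there to obtain $\chi_{y}(R_{1})$; then compare $R_{a}$ and $R_{b}$. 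If they differ, halt with output $1$; otherwise zero out every auxiliary register, increment $R_{1}$ by $1$, and loop back. This is essentially the definition of co-semi-recognition of $\{x\}$, but with the pleasant feature that we are allowed the computation to become undefined when $y=x$.

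If $y\neq x$, pick the least $\iota_{0}<\alpha$ with $\chi_{x}(\iota_{0})\neq\chi_{y}(\iota_{0})$. Each individual round completes in fewer than $\alpha$ many steps (because $Q^{\iota}$ halts and $\alpha$ is exponentially closed), so the counter $R_{1}$ successively takes each value $\iota\leq\iota_{0}$; during round $\iota_{0}$ the comparison fails and $P$ halts. If, on the other hand, $y=x$, no disagreement is ever found and $R_{1}$ is incremented through every ordinal below $\alpha$; since $R_{1}$ is never reset, the $\liminf$-rule forces its content to become $\alpha$ at the supremum of the round-completion times, producing a register overflow, so $P^{y}(0)$ is undefined. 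These two behaviours together witness exactly the weak $\alpha$-wITRM-co-semi-recognizability of $\{x\}$.

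The only delicate point is to make sure that the outer loop behaves correctly at the limit stages that are passed through during the computation. This is handled by two standard devices: first, the auxiliary registers are zeroed out at the end of each round, so that their $\liminf$ at limit times which accumulate round-boundaries is $0$ and the next round starts clean; second, the entry line of the outer loop is placed at a program-line index strictly below every line used inside a round, so that the $\liminf$ of the active program line at such limit times is precisely this entry line, at which moment $R_{1}$ holds the corresponding limit value of the counter. With these conventions in place, the case analysis above goes through verbatim, and this bookkeeping is really the only place where any care is required.
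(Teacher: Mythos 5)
Your proposal is correct and follows essentially the same route as the paper's proof: run the computing program on each $\iota<\alpha$ in turn, compare with the $\iota$-th oracle bit, halt on the first disagreement, and let the counter overflow (rendering the computation undefined) exactly when the oracle equals $x$, which is precisely weak co-semi-recognizability. The only quibble is that your claim that each round takes fewer than $\alpha$ steps is neither needed nor clearly true in general; what matters is only that each round terminates, so that every $\iota<\alpha$ is eventually reached.
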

\begin{proof}
Let $x\subseteq\alpha$ be $\alpha$-wITRM-computable, and pick a program $P$ and an ordinal $\xi<\alpha$ such that $P$ computes $x$ in the parameter $\xi$. Let $Q$ be the program that, for each $\iota<\alpha$, stored in some register $R$, computes $P(\iota,\xi)$ and compares the output to the $\iota$-th bit of the oracle. If they agree, $Q$ continues with $\iota+1$; otherwise, $Q$ halts. Clearly, $R$ will overflow at time $\alpha$ if and only if the oracle is equal to $x$, and otherwise, $Q$ will halt. 
\end{proof}

\begin{question}
    Suppose that $x\subseteq\alpha$ is $\alpha$-(w)ITRM-recognizable by the program $P$ and that $P^{x}$ halts in $\tau$ many steps. Does it follow that $x\in L_{\tau^{+\omega}}$ (where $\tau^{+\omega}$ denotes the next limit of admissible ordinals after $\tau$)? (This is true for $\alpha=\omega$ by Theorem 14 of \cite{ITRM Recog 2}.)
\end{question}

\end{document}